\newcommand{\Z}{\mathbb{Z}}
\newcommand{\Q}{\mathbb{Q}}
\let\C\relax
\newcommand{\C}{\mathbb{C}}
\let\P\relax
\newcommand{\P}{\mathbb{P}}
\newcommand{\Zp}{{\Z}_p}
\newcommand{\calP}{\mathcal{P}}
\newcommand{\Hp}{{\scriptsize H_p\left(\begin{matrix}\alpha\\\beta\end{matrix}\Big\vert z\right)}}
\newcommand{\Hpfull}{H_p\left(\begin{matrix}\alpha\\\beta\end{matrix}\Big\vert z\right)}
\newcommand{\Hvalue}[4]{{\scriptsize H_{#1}\left(\begin{matrix}#2\\#3\end{matrix}\Big\vert#4\right)}}
\newcommand{\Hvaluefull}[4]{H_{#1}\left(\begin{matrix}#2\\#3\end{matrix}\Big\vert#4\right)}
\numberwithin{equation}{section}
\theoremstyle{plain}
\newtheorem{theorem}[equation]{Theorem}
\newtheorem{proposition}[equation]{Proposition}
\newtheorem{lemma}[equation]{Lemma}
\theoremstyle{definition}
\newtheorem{definition}[equation]{Definition}
\newtheorem{notation}[equation]{Notation}
\newtheorem{example}[equation]{Example}
\theoremstyle{remark}
\newtheorem{remark}[equation]{Remark}
\DeclareMathOperator{\disc}{Disc}
\DeclareMathOperator{\frob}{Frob}
\DeclareMathOperator{\tr}{Tr}
\DeclareMathOperator{\id}{id}
\title{Hypergeometric $L$-functions in average polynomial time}
\author{Edgar Costa, Kiran S. Kedlaya, and David Roe}
\date{June 2020}
\subjclass[2010]{11Y16, 33C20 (primary), and 11G09, 11M38, 11T24 (secondary)}
\thanks{Costa and Roe were supported by the Simons Collaboration on Arithmetic Geometry, Number Theory, and Computation via Simons Foundation grant 550033.
Kedlaya was supported by NSF (DMS-1802161) and UCSD (Warschawski Professorship).}
\begin{document}

\begin{abstract}
We describe an algorithm for computing, for all primes $p \leq X$, the mod-$p$ reduction of the trace of Frobenius at $p$ of a fixed hypergeometric motive in time quasilinear in $X$. This combines the Beukers--Cohen--Mellit trace formula with average polynomial time techniques of Harvey et al.
\end{abstract}

\maketitle

\section{Introduction}

In the past, computation of arithmetic $L$-functions has largely been limited to familiar classes of low-dimensional geometric objects, such as hyperelliptic curves or K3 surfaces~\cite{chk-18}.
Recently, it has emerged that families of motives whose associated (Picard-Fuchs) differential equation is a hypergeometric equation
also have $L$-functions which can be computed at large scale. Such motives provide accessible examples of arithmetic $L$-functions with diverse configurations of Hodge numbers, some of which arise in heretofore unanticipated applications.
For example, certain hypergeometric motives appear among families of Calabi--Yau threefolds, where they give rise to arithmetic manifestations of mirror symmetry (as in~\cite{doran-etal-18}).

Using finite hypergeometric sums in the manner of  Greene~\cite{greene-87}, Katz~\cite{katz-90}, and especially McCarthy~\cite{mccarthy-13}, 
an explicit formula for the $L$-function of a hypergeometric motive was given by Beukers--Cohen--Mellit~\cite{beukers-cohen-mellit-15}. It
was then modified by Cohen and Rodriguez Villegas, using the Gross-Koblitz formula~\cite{gross-koblitz-79} to replace classical Gauss sums with the Morita $p$-adic gamma function. That work is unpublished, but is documented in the manuscript~\cite{watkins-15}; the resulting formula appears in~\cite[\S 8]{cohen-15} and
\cite[\S 7.1]{fite-kedlaya-sutherland-16}; it is implemented in GP/PARI \cite{pari}, Magma \cite{magma}, and SageMath \cite{sage}; and it is being used to tabulate hypergeometric $L$-functions in the L-Functions and Modular Forms Database \cite{lmfdb}. (For an alternate approach using the $p$-adic Frobenius structure on a hypergeometric equation, see~\cite{kedlaya-19}.)

The purpose of this paper is to describe a preliminary adaptation of \emph{average polynomial time} techniques for computation of $L$-functions to the setting of hypergeometric motives. Such techniques, based on \emph{accumulating remainder trees}, were introduced by Costa--Gerbicz--Harvey~\cite{costa-gerbicz-harvey-14} for the problem of finding Wilson primes; adapted to computing $L$-functions by
Harvey~\cite{harvey-14, harvey-15}; and further elaborated (and made practical in particular cases)
by Harvey--Sutherland~\cite{harvey-sutherland-14,harvey-sutherland-16}
and Harvey--Massierer--Sutherland~\cite{harvey-massierer-sutherland}.

To simplify matters, we consider here only a limited form of the problem: given a hypergeometric motive over $\Q$ and a bound $X$, for each prime $p \leq X$, we compute the reduction modulo $p$ of the trace of Frobenius at $p$ in time quasilinear in $X$. This eliminates some technical issues that would arise when computing the mod-$p^e$ reduction for $e > 1$, such as the computation of multiplicative lifts
and evaluation of the Morita $p$-adic gamma function in average polynomial time. Modulo $p$, the trace formula at $p$ for a parameter value $t$ is a polynomial in $t$ of degree $O(p)$ whose coefficients are essentially ratios of Pochhammer symbols. Computing the Pochhammer symbols themselves in average polynomial time is a straightforward adaptation of the corresponding computation for factorials done in~\cite{costa-gerbicz-harvey-14}; this approach can then be modified to include the polynomial evaluation.  

At the end of the paper, we discuss the prospects of lifting our present restrictions of working modulo $p$ (rather than a higher power) and of computing only the trace of the $p$-power Frobenius (rather than a higher power).
Eliminating both restrictions would yield an average polynomial time algorithm for computing arbitrary hypergeometric $L$-series.
However, the restricted computation described here is already of significant value for hypergeometric motives of weight 1, for which the trace of the $p$-power Frobenius is determined uniquely by its reduction modulo $p$ (except when $p$ is very small). Since the formula for the trace of the $q$-power Frobenius involves a summation over $q-1$ terms, our method reduces the complexity of computing the first $X$ terms of the $L$-series from $X^2$ to $X^{3/2}$ (see Theorem~\ref{T:L-series use case}).

We end this introduction by asking (as in~\cite{kedlaya-19}) whether a similar trace formula exists for \emph{$A$-hypergeometric systems} in the sense of Gelfand--Kapranov--Zelevinsky~\cite{gelfand-kapranov-zelevinsky-08}.
Such a formula might unlock even more classes of previously inaccessible $L$-functions.

\section{Background}

\subsection{The \texorpdfstring{$p$-adic $\Gamma$}{p-adic Gamma} function}

For a detailed development of the following material, we recommend~\cite[\S~7.1]{robert-98} and~\cite[\S~6.2]{rodriquez-villegas-07}.

\begin{definition}
The (Morita) $p$-adic gamma function is the unique continuous function $\Gamma_p: \Z_p \to \Z_p^\times $ which satisfies
\begin{equation}
\label{equation:padicgamma-integers}
    \Gamma_p(n+1) = (-1)^{n+1} \prod_{\substack{i=1 \\ (i,p) = 1}} ^n i  = (-1)^{n+1} \frac{\Gamma(n+1)}{p^{\lfloor n/p \rfloor} \Gamma(\lfloor n/p \rfloor + 1)}
\end{equation}
for all $n \in \Z_{\geq 0}$.
For $p \geq 3$, it is Lipschitz continuous with $C = 1$, i.e.,
\begin{equation} \label{eq:Lipschitz continuity}
| \Gamma_p(x) - \Gamma_p(y)  |_p \leq | x - y|_{p}.
\end{equation}
There is also a functional equation analogous to the one for the complex $\Gamma$ function:
\begin{equation}\label{eq:gamma_feq}
    \Gamma_p (x + 1) = \omega(x)\Gamma_p (x),
    \qquad
    \omega(x) \colonequals
    \begin{cases} -x & \text{if } x \in \Z_p^\times \\
    -1 & \text{if } x \in p\Z_p.
    \end{cases}
\end{equation}
\end{definition}

\begin{remark}
It was originally observed by Dwork (writing pseudonymously in~\cite{boyarsky-80},
as corroborated in~\cite{katz-tate-99}; see \cite[\S 6.2]{rodriquez-villegas-07} for the formulation given here) that
$\Gamma_p$ admits an easily computable Mahler expansion on any mod-$p$ residue disc:
\begin{equation}
    \Gamma_{p}(-a + px) = \sum_{k \geq 0} p^k c_{a + kp} (x)_k,
\end{equation}
where $(x)_k \colonequals x(x+1) \cdots (x + k -1)$ is the usual Pochhammer symbol, and $c_n$ is defined by the recursion
\begin{equation}
n c_{n} = c_{n-1} + c_{n-p}, \qquad c_0 = 1, c_n = 0 \mbox{ for $n < 0$}.
\end{equation}
Thus, one may compute $\Gamma_p(x)$ modulo $p^f$ using $O(pf)$ ring operations.
\end{remark}

\subsection{Hypergeometric motives and their \texorpdfstring{$L$-functions}{L-functions}}

While the following discussion is needed to put our work in context, the reader is encouraged to skip ahead to \eqref{eq:Hq}, as the essential content of the paper is the computation of that formula.

\begin{definition}
A \emph{hypergeometric datum} is a pair of disjoint tuples
$\alpha=(\alpha_1,\ldots,\alpha_r)$ and $\beta=(\beta_1,\ldots,\beta_r)$
valued in $\Q \cap [0,1)$ which are \emph{Galois-stable} (or \emph{balanced}): any two reduced fractions with the same denominator occur with the same multiplicity.
\end{definition}

\begin{remark}\label{rem:cyclotomic data}
There are several equivalent ways to specify a hypergeometric datum.
One is to specify two tuples $A$ and $B$ for which the identity
\[
\prod_{j=1}^r \frac{x - e^{2\pi i \alpha_j}}{x - e^{2 \pi i \beta_j}}
= \frac{\prod_{a \in A} \Phi_a(x)}{\prod_{b \in B} \Phi_b(x)}
\]
holds in $\C(x)$, where $\Phi_n(x)$ denotes the $n$-th cyclotomic polynomial.
\end{remark}

\begin{definition}\label{def:zigzag function}
The \emph{zigzag function} $Z_{\alpha, \beta} : [0, 1] \to \Z$ associated to a hypergeometric datum $(\alpha, \beta)$ is defined by
\[
Z_{\alpha, \beta}(x) \colonequals \#\{j : \alpha_j \leq x\} - \#\{j : \beta_j \leq x\}.
\]
\end{definition}

\begin{notation} \label{notation:HGM}
We denote by $M^{\alpha,\beta}$ the putative (see Remark~\ref{R:motives})
hypergeometric family over $\P^1$ associated to the hypergeometric datum $(\alpha, \beta)$. Its expected properties are as follows.
\begin{itemize}
\item
It is a pure motive of degree $r$ with base field $\Q(t)$ and coefficient field $\Q$.
\item
Its Hodge realization is the one constructed by Fedorov in~\cite{fedorov-18}. This means that as per~\cite[Theorem~2]{fedorov-18},
its minimal motivic weight is
\begin{equation}\label{eq:mot_wt}
\begin{split}
w &= \max\{Z_{\alpha,\beta}(x): x \in [0,1]\} -
\min\{Z_{\alpha,\beta}(x): x \in [0,1]\} - 1 \\
&=  \max\{Z_{\alpha,\beta}(x): x \in \alpha\} -
\min\{Z_{\alpha,\beta}(x): x \in \beta\} - 1
\end{split}
\end{equation}
and a similar recipe (see~\cite[Conjecture~1.4]{corti-golyshev-11} or \cite[Theorem~1]{fedorov-18}) computes the Hodge numbers. Note that $rw$ is even \cite[1.2]{watkins-15}.
\item
Its $\ell$-adic \'etale realization is Katz's perverse sheaf \cite[Chapter~8]{katz-90}.
\item
For $z \in \Q \setminus \{0,1,\infty\}$,
let $M^{\alpha, \beta} _z$ denote the specialization of $M^{\alpha,\beta}$ at $t = z$.
Then the primes of bad reduction for $M^{\alpha,\beta}_z$ are those primes $p$ at which $z$ and $z-1$ are not both $p$-adic units (called \emph{tame} primes) and those primes $p$ at which the $\alpha_i$ and $\beta_i$ are not all integral (called \emph{wild} primes).
By the compatibility with Katz, the $L$-function associated to $M^{\alpha,\beta}_z$ is given by the Beukers--Cohen--Mellit trace formula \cite{beukers-cohen-mellit-15}.
\end{itemize}
\end{notation}

\begin{remark}
In order to avoid some case subdivisions in what follows, we assume hereafter that $0 \notin \alpha$.
This is relatively harmless because of the isomorphism
\begin{equation}
M^{\alpha,\beta}_z \cong M^{\beta,\alpha}_{1/z}.
\end{equation}
\end{remark}

\begin{example}
As per~\cite{ono-98},
$M^{(1/2, 1/2), (0, 0)}$ is the motive $H^1 (E, \Q)$, where
\begin{equation}
    E \colon y^2 = -x(x-1)(x-t).
\end{equation}
For other (putative) examples, see~\cite{barman-kalita-12} and~\cite{naskrecki-18}.
\end{example}

\begin{remark} \label{R:motives}
We use the qualifier ``putative'' in Notation~\ref{notation:HGM} for two reasons. One is to avoid any precision about motives;
while \cite{beukers-cohen-mellit-15} describes a specific variety whose $\ell$-adic cohomology includes Katz's perverse sheaf, lifting this containment to the motivic level would require a deeper dive into motivic categories (including a choice of which such category to consider). 

The other, more serious issue is that there is no existing reference that provides this missing precision on hypergeometric motives. 
The reader seeking to remedy this should start with \cite{andre} for a user's guide to motives.
\end{remark}

\subsubsection{Trace formulas}

We are particularly interested in computing
\begin{equation}\label{eqn:charpoly}
\det(1 - T \frob | M^{\alpha, \beta} _z),
\end{equation}
where $\frob$ is the Frobenius automorphism at a prime $p$ of good reduction for $M^{\alpha,\beta}_z$. (For concreteness, we may replace $M^{\alpha, \beta}_z$ with an \'etale realization.)
We ignore primes of bad reduction both because they are small enough to be handled individually and because a somewhat different recipe is required (see~\cite[\S~11]{watkins-15} for a partial description, noting that our $z$ is Watkins's $1/t$).

\begin{definition}
Let $\{x\} \colonequals x - \lfloor x \rfloor$ be the fractional part of $x$. For $q = p^f$, define
\begin{equation}
    \Gamma_q^*(x)\colonequals\prod_{v=0}^{f-1}\Gamma_p(\{p^vx\}),
\end{equation}
and then define a $p$-adic analogue of the Pochhammer symbol by setting
\begin{equation}\label{eq:poch_def}
(x)_{m}^* \colonequals \frac{\Gamma_q^*\left(x+\frac{m}{1-q}\right)}{\Gamma_q^*(x)}.
\end{equation}
Let $[z]$ be the multiplicative representative in $\Z_p$ of the residue class of $z$ (the unique $(p-1)$-st root of $1$ congruent to $z$ modulo $p$).
As in~\cite[\S~2]{watkins-15}, write
\begin{equation}\label{eq:Hq}
\Hvaluefull{q}{\alpha}{\beta}{z} \colonequals \frac{1}{1-q}\sum_{m=0}^{q-2}(-p)^{\eta_m(\alpha)-\eta_m(\beta)}q^{D + \xi_m(\beta)}\left(\prod_{j=1}^r \frac{(\alpha_j)_m^*}{(\beta_j)_m^*}\right)[z]^m,
\end{equation}
using the notations
\begin{align}
\eta_m(x_1,\ldots,x_r) &\colonequals\sum_{j=1}^r\sum_{v=0}^{f-1}\left\{p^v\left(x_j+\tfrac{m}{1-q}\right)\right\}-\left\{p^v x_j\right\}, \\
\xi_m(\beta) &\colonequals\#\{j:\beta_j=0\}-\#\left\{j:\beta_j+\tfrac{m}{1-q}=0\right\}, \label{eq:xi} \\
D &\colonequals \frac{w + 1 - \#\{j: \beta_j=0\}}{2}.
\end{align}
\end{definition}

By adapting~\cite[Theorem~1.3]{beukers-cohen-mellit-15} using the Gross--Koblitz formula as in~\cite[\S~2]{watkins-15} (and twisting by $q^D$ to minimize the weight), we deduce the following.
\begin{theorem}\label{thm:trace of Frobenius}
We have $\Hvaluefull{p^f}{\alpha}{\beta}{z} = \tr(\frob^f | M^{\alpha, \beta} _{z} ) \in \Z$.
\end{theorem}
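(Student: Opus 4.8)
The plan is to read off \eqref{eq:Hq} directly from the Beukers--Cohen--Mellit trace formula. By \cite[Theorem~1.3]{beukers-cohen-mellit-15}, together with the identification of the $\ell$-adic realization of $M^{\alpha,\beta}_z$ with Katz's sheaf (Notation~\ref{notation:HGM}), the number $\tr(\frob^f \mid M^{\alpha,\beta}_z)$ equals a finite sum over $m = 0,\dots,q-2$ whose $m$-th term is, up to an explicit power of $q$ and the factor $\omega^{m}(z)$, a product over $j$ of ratios of Gauss sums over $\F_q$ attached to characters $\omega^{-k}$ for integers $k$ built from $m$ and the $\alpha_j$ (resp.\ $\beta_j$); here $\omega$ is the Teichm\"uller character and a suitable additive character is fixed throughout. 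Since $z$ reduces into $\F_p^\times$, one has $\omega(z)=[z]$, so this already has the shape of \eqref{eq:Hq} with $\prod_j (\alpha_j)_m^*/(\beta_j)_m^*$ replaced by the corresponding ratio of Gauss sums. One then substitutes, for each such Gauss sum, the Gross--Koblitz formula \cite{gross-koblitz-79}: fixing $\pi$ with $\pi^{p-1}=-p$, for $0 \le k \le q-2$ one has $g(\omega^{-k}) = -\,\pi^{s_p(k)}\prod_{v=0}^{f-1}\Gamma_p(\{p^v k/(q-1)\})$, where $s_p(k)$ is the sum of the base-$p$ digits of $k$. Reorganizing the result, as in \cite[\S~2]{watkins-15}, should then give \eqref{eq:Hq} on the nose.

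Three things must be tracked through this reorganization. First, the powers of $\pi$ collapse into a power of $-p$: the fractional parts $\{p^v k/(q-1)\}$, $0\le v<f$, are the cyclic shifts of the purely periodic base-$p$ expansion of $k/(q-1)$, so $\sum_{v=0}^{f-1}\{p^v k/(q-1)\} = s_p(k)/(p-1)$ and hence $\pi^{s_p(k)} = (-p)^{\sum_v\{p^v k/(q-1)\}}$; writing the arguments in terms of the $\alpha_j,\beta_j$ and $m$ (so that the shifted numerator character for $\alpha_j$ contributes $\{p^v(\alpha_j+\tfrac{m}{1-q})\}$ and the unshifted one $\{p^v\alpha_j\}$, and similarly for $\beta$), the accumulated exponent of $-p$ becomes exactly $\eta_m(\alpha)-\eta_m(\beta)$, while the surviving $\Gamma_p$-products assemble into $\prod_j(\alpha_j)_m^*/(\beta_j)_m^*$ by the definitions of $\Gamma_q^*$ and $(x)_m^*$. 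Second, the degenerate terms: whenever $\beta_j$ or $\beta_j+\tfrac{m}{1-q}$ is an integer the relevant character is trivial, with Gauss sum $-1$ (consistently with $-\pi^0\prod\Gamma_p(0)=-1$ since $\Gamma_p(0)=1$), and the power of $q$ in front shifts by a half-integer compared with the generic case; these shifts pair up and are recorded exactly by $\xi_m(\beta)$ of \eqref{eq:xi}, the $\Gamma_q^*$ convention being what makes $(x)_m^*$ behave correctly at such arguments. Third, the normalization: the BCM formula computes the trace on a realization that is not weight-minimized, so one applies the uniform twist by $q^D$, with $D$ as defined, to descend to the minimal motivic weight $w$ of \eqref{eq:mot_wt}; this yields the factor $q^{D+\xi_m(\beta)}$.

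The integrality $\in\Z$ is not extra work: Frobenius eigenvalues on an $\ell$-adic realization are algebraic integers (Deligne), and the trace is rational since the coefficient field of $M^{\alpha,\beta}$ is $\Q$, so the common value is a rational integer (this is also asserted in \cite[Theorem~1.3]{beukers-cohen-mellit-15}); it can alternatively be seen $p$-adically from the final formula via the Galois-stability of $(\alpha,\beta)$. I expect the main obstacle to be organizational: keeping every sign and every power of $q$ straight through the substitution, and---more substantively---checking that the steps are legitimate for all good $p$ and for \emph{all} $f\ge 1$, not only when $f$ is a multiple of the order of $p$ modulo the common denominator $N$ of the $\alpha_j,\beta_j$ (the case $N\mid q-1$, in which each character index really is some $k/(q-1)$ and classical Gauss sums literally appear). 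Handling general $f$ is exactly what forces the passage from classical Gauss sums to the Morita $\Gamma_p$ and the $\{p^v x\}$ normalization, and is the technical heart of the reduction of \cite[\S~2]{watkins-15} that a complete proof would need to reproduce in detail.
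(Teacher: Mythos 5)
Your proposal follows the same route the paper takes: the paper's justification of Theorem~\ref{thm:trace of Frobenius} is precisely the sentence that precedes it, namely that it is obtained by taking \cite[Theorem~1.3]{beukers-cohen-mellit-15} (via the compatibility of the $\ell$-adic realization with Katz's sheaf noted in Notation~\ref{notation:HGM}), substituting the Gross--Koblitz formula as in \cite[\S~2]{watkins-15}, and twisting by $q^D$; you have simply unpacked what that substitution and reorganization involve (the digit-sum/fractional-part bookkeeping giving $\eta_m$, the degenerate-character bookkeeping giving $\xi_m$, the weight normalization giving $D$, and the integrality from Deligne plus $\Q$-coefficients). You also correctly flag that the real content, which the paper delegates entirely to \cite{watkins-15}, is the case of general $f$ where classical Gauss sums must be replaced by $\Gamma_q^*$.
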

From~\cite[\S~11]{watkins-15}, we also have a precise formula for the functional equation associated to $\det(1 - T\frob | M^{\alpha, \beta} _z)$.
\begin{theorem}
We have
\begin{equation}
\det(1 - q^{-w} T^{-1}  \frob | M^{\alpha, \beta} _z)
= \pm q^{-rw/2} T^{-r} \det(1 - T  \frob | M^{\alpha, \beta} _z)
\end{equation}
where $\pm$ denotes $+1$ if $w$ is even, and otherwise is given by
\[
\begin{cases}
\left( \frac{\Delta}{p} \right), \quad \Delta =
z(z-1) \prod_{a \in A} \disc(\Phi_a(x)) & \mbox{for } r \equiv 0 \pmod{2} \\
-\!\left( \frac{\Delta}{p} \right), \quad \Delta = (1-z) \prod_{b \in B} \disc(\Phi_b(x)) & \mbox{for } r \equiv 1 \pmod{2}.
\end{cases}
\]
Here $A,B,\Phi_a,\Phi_b$ are as in Remark~\ref{rem:cyclotomic data}
and $\left( \frac{\Delta}{p} \right)$ is the Kronecker symbol.
\end{theorem}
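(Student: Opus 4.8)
The plan is to recover both the identity and the value of the sign from the self-duality of the hypergeometric sheaf, combined with an explicit computation of its determinant.

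\emph{The shape of the functional equation.} Write $P(T) \colonequals \det(1 - T\frob \mid M^{\alpha,\beta}_z) = \prod_{i=1}^r (1 - \lambda_i T)$. Since $M^{\alpha,\beta}_z$ is pure of weight $w$ --- its \'etale realization being Katz's perverse sheaf, to which the Weil conjectures apply --- each reciprocal root satisfies $|\lambda_i| = q^{w/2}$; because moreover $P(T) \in \Z[T]$ and $rw$ is even, its leading coefficient $(-1)^r \det(\frob \mid M^{\alpha,\beta}_z)$ equals $\pm q^{rw/2}$. Galois-stability of $(\alpha,\beta)$ makes the multisets $\{-\alpha_j \bmod 1\}$ and $\{-\beta_j \bmod 1\}$ coincide with $\{\alpha_j\}$ and $\{\beta_j\}$, so Katz's description of the dual of a hypergeometric sheaf \cite[Ch.~8]{katz-90} identifies $(M^{\alpha,\beta}_z)^{\vee}$ with $M^{\alpha,\beta}_z$ up to a Tate twist by $w$ and a quadratic character $\chi$ of $G_{\Q}$ depending on $(\alpha,\beta)$ and $z$; equivalently, $\{\lambda_i\}$ is stable under $\lambda \mapsto q^w\chi(\frob)/\lambda$. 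Substituting this into $\det(1 - q^{-w}T^{-1}\frob \mid M^{\alpha,\beta}_z) = \prod_i (1 - q^{-w}\lambda_i T^{-1})$ and clearing denominators yields the displayed identity, with
\[
\pm \;=\; (-1)^r\, q^{rw/2} / \det(\frob \mid M^{\alpha,\beta}_z) \;\in\; \{\pm 1\}.
\]
Everything thus reduces to computing $\det(\frob \mid M^{\alpha,\beta}_z)$, i.e.\ the determinant character of $M^{\alpha,\beta}_z$.

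\emph{The determinant character.} The determinant of the hypergeometric sheaf $\mathcal{H}(\alpha;\beta)$ is a rank-one object on $\P^1 \setminus \{0,1,\infty\}$, and Katz \cite[Ch.~8]{katz-90} writes it out explicitly in terms of the tame local characters at $0$, $1$, $\infty$ attached to the $\alpha_j$, $\beta_j$ and of the Teichm\"uller character. Evaluating the corresponding Galois character at (the Frobenius class of) $t = z$, and incorporating the weight-minimizing twist by $q^D$, expresses $\det(\frob \mid M^{\alpha,\beta}_z)$ as a product of: a power of $[z]$; the contribution of the tame ramification at $t = 1$, which supplies the factor $z(z-1)$ (resp.\ $1-z$); and Gauss sums for the cyclotomic characters indexed by $A$ and $B$, which over a complete Galois orbit combine into Jacobi sums of absolute value a power of $q$. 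When $w$ is even these contributions multiply out to $(-1)^r q^{rw/2}$, so the sign is $+1$. When $w$ is odd, the balancedness of the datum leaves exactly one quadratic character unmatched, so a single quadratic Gauss sum $g$ survives; using $g^2 = \left(\tfrac{-1}{p}\right)p$ and discarding rational squares, the ratio $\det(\frob \mid M^{\alpha,\beta}_z)/q^{rw/2}$ collapses to the Kronecker symbol $\left(\frac{\Delta}{p}\right)$, where $\Delta$ is $z(z-1)$ (resp.\ $1-z$) times $\prod_{a \in A}\disc(\Phi_a)$ (resp.\ $\prod_{b \in B}\disc(\Phi_b)$) up to squares --- these discriminants being exactly the ``quadratic parts'' carried by the Jacobi-sum products over the cyclotomic orbits. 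Feeding this back into the formula for the sign, and keeping track of the overall factor $(-1)^r$, gives the stated case distinction.

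\emph{Where the difficulty lies.} The first step is formal; the content is the determinant computation, and the genuinely delicate point is the sign. One must make Katz's determinant formula fully explicit after the specialization $t = z$ and the twist by $q^D$; match the quadratic parts of the cyclotomic Jacobi-sum products against $\prod_{a\in A}\disc(\Phi_a)$ (resp.\ $\prod_{b\in B}\disc(\Phi_b)$) up to squares; decide whether the surviving quadratic character ramifies over $\{0,\infty\}$ or over $1$ --- which is precisely what separates the $r$ even and $r$ odd cases and selects $z(z-1)$ versus $1-z$; and confirm that nothing quadratic is left when $w$ is even. One must also reconcile the right-hand side $\det(1 - T\frob \mid M^{\alpha,\beta}_z)$ with the Euler factor of the dual $(M^{\alpha,\beta}_z)^{\vee}(-w) = M^{\alpha,\beta}_z \otimes \chi$, which agree only once the character $\chi$ is accounted for. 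A route bypassing the cohomology is to read the sign directly off \eqref{eq:Hq}: the polynomial $\det(1 - T\frob \mid M^{\alpha,\beta}_z)$ is recovered from the traces $\Hvaluefull{q}{\alpha}{\beta}{z}$, and its leading coefficient can be evaluated as a product of Pochhammer-symbol quotients whose $p$-adic valuations, interpreted via Gross--Koblitz, reproduce the same Kronecker symbol; this is the computation carried out in \cite[\S 11]{watkins-15}.
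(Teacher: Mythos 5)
The paper does not prove this statement; it is quoted verbatim from Watkins's manuscript (the sentence preceding the theorem reads ``From \cite[\S~11]{watkins-15}, we also have a precise formula for the functional equation\dots''), so there is no in-paper argument for you to match. You are therefore supplying a proof sketch where the paper has only a citation, and the right question is whether your sketch is sound.

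Your first step --- deduce the \emph{shape} of the functional equation from purity of weight $w$, rationality of $P(T)$, evenness of $rw$, and self-duality, and thereby reduce the sign to the ratio $(-1)^r\det(\frob)/q^{rw/2}$ --- is the standard move, and that reduction is correct. But there is a wrinkle in the way you invoke duality. You write $(M^{\alpha,\beta}_z)^\vee \cong M^{\alpha,\beta}_z(w)\otimes\chi$ with $\chi$ a quadratic character, and claim that ``substituting and clearing denominators yields the displayed identity.'' It does not, unless $\chi(\frob)=1$: writing out $\det(1-q^{-w}T^{-1}\frob) = T^{-r}(-1)^r q^{-rw}\det(\frob)\prod_i\bigl(1 - q^w\lambda_i^{-1}T\bigr)$ and using the stability $\{q^w/\lambda_i\} = \{\chi(\frob)\lambda_i\}$, the last product becomes $P(\chi(\frob)T)$, not $P(T)$. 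So if $\chi$ were genuinely nontrivial the theorem's identity would fail at the primes where $\chi(\frob_p)=-1$. The hypergeometric motive is in fact self-dual up to Tate twist \emph{without} a character (e.g.\ via cup product / Poincar\'e duality on the underlying variety of~\cite{beukers-cohen-mellit-15}); the Kronecker symbol in the sign is the \emph{determinant} character $\det(\frob)/q^{rw/2}$, not a twist appearing in the duality. Your sketch appears to conflate the two; since you proceed to compute the determinant anyway, the argument recovers, but the logic as written is off.

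The second step --- pin down $\det(\frob \mid M^{\alpha,\beta}_z)$ from Katz's explicit determinant of the hypergeometric sheaf, specialize at $t=z$, twist by $q^D$, and extract the quadratic residue part via Gauss/Jacobi sums and cyclotomic discriminants --- is where all the content of the theorem lies, and as you acknowledge this part is left as a sketch. In particular, you do not explain the asymmetry in the statement (why the discriminant product runs over $A$ when $r$ is even but over $B$ when $r$ is odd, nor why one gets $z(z-1)$ versus $1-z$), and you do not verify that when $w$ is even nothing quadratic survives. These are exactly the ``genuinely delicate'' points you flag; without them the proof is not complete, and your closing remark essentially defers to Watkins \S 11 --- which is also all the paper itself does. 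In short: your outline is sensible and roughly how one would prove this, but it contains a small logical slip in the duality step and leaves the sign computation (the actual substance) unproved.
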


Using these two results, we can recover $\det(1 - T \frob | M^{\alpha, \beta} _z)$
from the values
$\Hvalue{p^f}{\alpha}{\beta}{z}$ for $f = 1, \dots, \lfloor \tfrac{r}{2} \rfloor$.

\subsubsection{Complexity considerations}

Computing $\Hvalue{p^f}{\alpha}{\beta}{z}$ via~\eqref{eq:Hq}
requires $O(f p^f)$ arithmetic operations,\footnote{The factor of $f$ comes from computing $\Gamma_p$. We do not incur a factor of $f$ from computing $\Gamma_q^*$ because the latter is invariant under $x \mapsto \{ px \}$, so we only need $O(q/f)$ evaluations of $\Gamma_q^*$.} due to the number of terms in the sum and product~\cite[\S 2.1.4]{watkins-15}. 
As these operations are in $\Z_p$, we must also pay attention to $p$-adic working precision; since $\Hvalue{p^f}{\alpha}{\beta}{z}$ is the sum of $r$ algebraic integers of complex norm $p^{wf/2}$, it is uniquely determined by its reduction modulo $p^e$ for $e > \tfrac{1}{2} wf + \log_p(2r)$.

For the use case of computing $L$-series, a different analysis applies.

\begin{theorem} \label{T:L-series use case}
Fix a hypergeometric datum $(\alpha, \beta)$. Given $\Hvalue{p}{\alpha}{\beta}{z}$ for all primes $p \leq X$, 
one can compute the first $X$ coefficients of the Dirichlet $L$-series associated to $M^{\alpha,\beta}_z$ in at most
$O(X^{3/2})$ arithmetic operations.
\end{theorem}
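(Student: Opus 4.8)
The plan is to reduce the statement to assembling an Euler product out of local factors, the vast majority of which are already supplied by the hypothesis, the remainder requiring only a short auxiliary computation. Write the Dirichlet $L$-series as $\sum_{n \geq 1} a_n n^{-s} = \prod_p L_p(p^{-s})^{-1}$, where for $p$ of good reduction $L_p(T) \colonequals \det(1 - T\,\frob \mid M^{\alpha,\beta}_z)$ and, at the finitely many bad primes, $L_p$ is the appropriate local polynomial, computed individually as in \cite[\S 11]{watkins-15}; the goal is to produce $a_1, \dots, a_X$. To recover $a_n$ for all $n \leq X$ it is enough to know, for each prime $p \leq X$, the coefficients of $T^0, \dots, T^{d_p}$ in $L_p(T)$, where $d_p \colonequals \min(\lfloor \log_p X \rfloor, r)$. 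For $p > \sqrt{X}$ one has $d_p \leq 1$, so by Theorem~\ref{thm:trace of Frobenius} the only datum needed is $L_p(T) \equiv 1 - \Hvalue{p}{\alpha}{\beta}{z}\, T \pmod{T^2}$, which is precisely the input. (If only the reduction of $\Hvalue{p}{\alpha}{\beta}{z}$ modulo $p$ is supplied, then for weight-$1$ motives, as discussed in the introduction, this residue determines the integer for all but $O(1)$ primes, which are handled directly.)

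For each of the $O(\sqrt{X}/\log X)$ primes $p \leq \sqrt{X}$, I would instead reconstruct $L_p(T)$ in full: using~\eqref{eq:Hq}, compute $\Hvalue{p^f}{\alpha}{\beta}{z} = \tr(\frob^f \mid M^{\alpha,\beta}_z)$ for $2 \leq f \leq \min(\lfloor \log_p X\rfloor, \lfloor r/2\rfloor)$ (the value at $f = 1$ being given); pass from these power sums to the coefficients $c_0, \dots, c_{\min(\lfloor \log_p X\rfloor, \lfloor r/2\rfloor)}$ of $L_p(T)$ by Newton's identities; and, when $d_p > \lfloor r/2\rfloor$, recover the remaining coefficients from the functional equation stated above, fixing the ambiguous sign by the Kronecker symbol there. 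By the footnote to~\eqref{eq:Hq}, each value $\Hvalue{p^f}{\alpha}{\beta}{z}$ costs $O(f p^f)$ operations, and is needed only when $p^f \leq X$, i.e.\ $p \leq X^{1/f}$. Since $\sum_{p \leq Y} p^f = O(Y^{f+1})$ for fixed $f$, summing over the (at most $\lfloor r/2\rfloor$) relevant values of $f \geq 2$ yields a total of $\sum_{f \geq 2} O(X^{1+1/f}) = O(X^{3/2})$, dominated by the $f = 2$ term. The reconstructions themselves cost $O(r^2)$ per prime, hence $O(\sqrt X)$ in all.

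It remains to assemble $\prod_{p \leq X} L_p(p^{-s})^{-1}$. For each $p \leq X$, invert $L_p(T)$ as a power series modulo $T^{\lfloor \log_p X\rfloor + 1}$, at cost $O((\log_p X)^2)$ — which sums to $O(X)$, since $\log_p X < 2$ once $p > \sqrt X$ — and multiply the resulting local Dirichlet series into the running array $(a_n)_{n \leq X}$ one prime at a time; the in-place multiplication modifies the entry indexed by $n$ at most $1 + v_p(n)$ times, so it costs $O(X/(p-1))$ per prime and $O(X \log\log X)$ in total. Together with the $O(1)$ bad-prime factors, all of this is absorbed into the $O(X^{3/2})$ bound from the previous step, which completes the argument.

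I expect the only real difficulty to be organizational: one must isolate the minimal set of Frobenius traces that are genuinely needed — so that only primes up to $\sqrt X$ ever require $f \geq 2$, which is what confines the expensive part to $O(X^{3/2})$ — and check that the resulting double sum over $(p,f)$ is indeed governed by its $f = 2$ term. One also has to carry enough $p$-adic precision in~\eqref{eq:Hq} (beyond $\tfrac12 w f + \log_p(2r)$, as noted after Theorem~\ref{thm:trace of Frobenius}) to recognize each $\Hvalue{p^f}{\alpha}{\beta}{z}$ as an exact integer, though this affects only the sizes of the ring elements manipulated, not the operation count.
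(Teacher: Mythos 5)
Your proof is correct and takes essentially the same approach as the paper's: identify that the only extra Frobenius traces needed beyond the input are $\Hvalue{p^f}{\alpha}{\beta}{z}$ for non-prime prime powers $p^f \leq X$ (equivalently, for primes $p \leq \sqrt{X}$), observe that each such trace costs $O(f p^f)$ by the footnote, and bound the total by $O(X^{3/2})$. Your version fills in details the paper leaves implicit (Newton's identities, the functional equation, assembling the Euler product) and does a slightly sharper double-sum estimate in place of the paper's cruder $O(X^{1/2}/\log X) \cdot O(X \log X)$ bound, but the core argument and the governing estimate are identical.
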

\begin{proof}
The first $X$ coefficients of the Dirichlet series are determined by the coefficients indexed by prime powers up to $X$,
and hence by the values $\Hvalue{q}{\alpha}{\beta}{z}$ for all prime powers $q \leq X$.
The number of such $q$ which are not prime is $O(X^{1/2}/\log X)$; for $q = p^f$, evaluating \eqref{eq:Hq} takes $O(f p^f) = O(X \log X)$ arithmetic operations.
\end{proof}

\section{Accumulating remainder trees}\label{sec:art}

The use of a \emph{remainder tree} to expedite modular reduction has its origins in the fast Fourier transform (FFT).
An early description was given by Borodin--Moenck~\cite{borodin-moenck-74}; for a modern treatment
with more historical references, see~\cite{bernstein-08}.

\emph{Accumulating remainder trees} were introduced in~\cite{costa-gerbicz-harvey-14}
in order to compute $(p-1)! \pmod{p^2}$ for many primes $p$.
We use the variant described in~\cite[\S 4]{harvey-sutherland-14}.

\begin{definition}\label{def:acc_rem_tree notation}
Suppose $\calP$ is a sequence $p_1, \ldots, p_{b-1}$ of pairwise coprime integers with $p_i \le X$, and $A_0, \ldots, A_{b-2}$
is a sequence of $2 \times 2$ integer matrices.
We may use an accumulating remainder tree to compute
\begin{equation}
C_n \colonequals A_0 \cdots A_{n-1} \bmod p_n
\end{equation}
for $1 \le n < b$ as follows.
For notational convenience we assume $b = 2^\ell$, set $A_{b-1} = 0$ and $p_0 = 1$.
Then as in~\cite[\S~4]{harvey-sutherland-14}, write
\begin{equation}
\begin{aligned}
m_{i,j} &\colonequals p_{j2^{\ell-i}} p_{j2^{\ell-i}+1} \cdots p_{(j+1)2^{\ell-i}-1}, \\
A_{i,j} &\colonequals A_{j2^{\ell-i}} A_{j2^{\ell-i}+1} \cdots A_{(j+1)2^{\ell-i}-1}, \\
C_{i,j} &\colonequals A_{i, 0} \cdots A_{i, j-1} \bmod m_{i,j}.
\end{aligned}
\end{equation}
\end{definition}
This leads us to Algorithm~\ref{alg:acc_rem_tree}.
\begin{algorithm}
\DontPrintSemicolon
\SetKwProg{Fn}{def}{\string:}{}
\SetKwInOut{Input}{Input}
\SetKwInOut{Output}{Output}
\SetKwFunction{RemTree}{RemTree}

\Fn{\RemTree{$\{A_{i}\}$, $\{p_i\}$}}{
\Input{$A_0, \ldots, A_{b-1}, p_0, \ldots, p_{b-1}$ as in Definition~\ref{def:acc_rem_tree notation}}
\Output{$\{C_{i}\}$}

\For{$j \colonequals 0$ \KwTo $b-1$}{
    $m_{\ell, j} \colonequals p_j$ and $A_{\ell, j} \colonequals A_j$\;
}
\For{$i \colonequals \ell - 1$ \KwTo $0$}{
    \For{$j \colonequals 0$ \KwTo $2^i-1$}{
        $m_{i,j} \colonequals m_{i+1, 2j} m_{i+1, 2j+1}$ and $A_{i,j} \colonequals A_{i+1, 2j} A_{i+1, 2j+1}$\;
    }
}
$C_{0,0} \colonequals \id$\;
\For{$i \colonequals 1$ \KwTo $\ell$}{
    \For{$j \colonequals 0$ \KwTo $2^i-1$}{
        \If{$j$ even}{
            $C_{i,j} \colonequals C_{i-1,\lfloor j/2\rfloor} \bmod m_{i,j}$\;
        } \Else {
            $C_{i,j} \colonequals C_{i-1, \lfloor j/2\rfloor} A_{i,j-1} \bmod m_{i,j}$\;
        }
    }
}
\Return{${\{C_{\ell,j}\}}_{j=1, \dots, b-1}$}
\caption{Accumulating Remainder Tree}\label{alg:acc_rem_tree}
}
\end{algorithm}

\begin{theorem}[{\cite[Thm. 4.1]{harvey-sutherland-14}}]\label{thm:acc_rem_tree analysis}
Let $B$ be an upper bound on the bit size of $\prod_{j=0}^{b-1} p_j$ and $H$ an upper bound
on the bit size of any $p_i$ or $A_i$.
The running time of Algorithm~\ref{alg:acc_rem_tree} is
\[
O((B + bH)\log(B + bH)\log(b))
\]
(using \cite{harvey-vanderhoeven-19} for the runtime of integer multiplication) and its space complexity is
\[
O((B + bH)\log(b)).
\]
\end{theorem}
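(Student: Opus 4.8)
The plan is to bound the running time of Algorithm~\ref{alg:acc_rem_tree} level by level, treating separately the bottom-up \emph{product tree} (the loop building the $m_{i,j}$ and $A_{i,j}$) and the top-down \emph{remainder tree} (the loop building the $C_{i,j}$), and charging $M(n)$ bit operations for each multiplication or Euclidean division of $n$-bit operands, where $M(n) = O(n\log n)$ by \cite{harvey-vanderhoeven-19}. Throughout, let $s(\cdot)$ denote bit size, extended to a $2\times 2$ integer matrix by taking the maximum over its entries, so that every $s(\cdot)$ is at least $1$.

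First I would establish size bounds at a fixed level $i$. Since the $m_{i,j}$ for $0 \le j < 2^i$ multiply out to $\prod_k p_k$, their sizes satisfy $\sum_j s(m_{i,j}) \le B + 2^i \le B + b$. Each $A_{i,j}$ is a product of $2^{\ell-i}$ of the input matrices, each of size at most $H$; since a $2\times 2$ integer matrix multiplication at most doubles the entry size and adds $O(1)$, induction on the $\ell-i$ combining steps yields $s(A_{i,j}) = O(2^{\ell-i}H)$, so that $\sum_j s(A_{i,j}) = O(2^i\cdot 2^{\ell-i}H) = O(bH)$. And since $C_{i,j}$ is reduced modulo $m_{i,j}$, one has $s(C_{i,j}) = O(s(m_{i,j}))$, hence $\sum_j s(C_{i,j}) = O(B+b)$.

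Next I would bound the work at level $i$. Building the $m_{i,j}$ from the next level costs $\sum_j M(s(m_{i,j}))$, and building the $A_{i,j}$ costs $O(\sum_j M(s(A_{i,j})))$, the $O(1)$ absorbing the eight multiplications and four additions of a $2\times 2$ product. In the remainder tree, forming $C_{i,j}$ takes at most one multiplication of $C_{i-1,\lfloor j/2\rfloor}$ (of size $\le s(m_{i-1,\lfloor j/2\rfloor})$) by $A_{i,j-1}$ (of size $O(2^{\ell-i}H)$) followed by a reduction modulo $m_{i,j}$, hence $O(M(s(m_{i-1,\lfloor j/2\rfloor}) + 2^{\ell-i}H))$ bit operations; because two consecutive $j$ share a parent, $\sum_j s(m_{i-1,\lfloor j/2\rfloor}) \le 2(B+b)$ while $\sum_j 2^{\ell-i}H = bH$. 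In each of these sums the decisive point is that $\sum_k n_k\log n_k \le (\sum_k n_k)\log(\sum_k n_k)$ whenever every $n_k \ge 1$, so that $\sum_k M(n_k) = O(M(\sum_k n_k))$; this collapses the cost of a single level to $O(M(B+bH+b))$, and as $H \ge 1$ this is $O(M(B+bH)) = O((B+bH)\log(B+bH))$.

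Finally, summing over the $\ell = \lceil \log_2 b\rceil$ levels --- the $O(bH)$ cost of initializing the leaves being negligible --- gives the asserted time bound $O((B+bH)\log(B+bH)\log b)$. For the space bound, one level of the $m_{i,j}$ and $A_{i,j}$ occupies $O(B+bH)$ bits and one level of the $C_{i,j}$ occupies $O(B)$ bits, so even retaining all $\ell$ levels costs $O((B+bH)\log b)$ bits. I expect the only genuine obstacle to be the bookkeeping of these size estimates: one must check that the $A_{i,j}$ grow merely geometrically in $\ell-i$ (so that their per-level total stays $O(bH)$), that the reduced matrices $C_{i,j}$ never exceed their moduli, and that the superadditivity inequality is applied only to quantities that are at least $1$. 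These are exactly the estimates carried out in \cite[Thm.~4.1]{harvey-sutherland-14}, which we are quoting.
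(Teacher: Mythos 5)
The paper itself does not prove this theorem; it simply quotes \cite[Thm.~4.1]{harvey-sutherland-14}, so there is no in-paper argument to compare against. Your reconstruction is correct and is essentially the argument given in that reference: per-level bit-size bounds $\sum_j s(m_{i,j}) = O(B+b)$, $\sum_j s(A_{i,j}) = O(bH)$, $s(C_{i,j}) \le s(m_{i,j})$, followed by the superadditivity $\sum_k M(n_k) = O(M(\sum_k n_k))$ for $M(n) = O(n\log n)$ to collapse each level to $O(M(B+bH))$, then multiplying by $\ell = O(\log b)$ levels. The one point worth being slightly more careful about is that for odd $j$ the product $C_{i-1,\lfloor j/2\rfloor}\,A_{i,j-1}$ and the subsequent reduction modulo $m_{i,j}$ both cost $O(M(s(m_{i-1,\lfloor j/2\rfloor}) + 2^{\ell-i}H))$ because $s(m_{i,j}) \le s(m_{i-1,\lfloor j/2\rfloor})$, which you implicitly use and which holds since $m_{i,j}$ divides $m_{i-1,\lfloor j/2\rfloor}$; with that noted, the level-by-level accounting and the space bound both go through.
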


\subsection{Accumulating remainder tree with spacing}\label{sec:synchronization}
In most applications (including this one),
there is not a one-to-one correspondence between the moduli $p_i$ and the multiplicands $A_i$.
Rather, we will be given:
\begin{itemize}
\item
a list of matrices $A_0,\dots,A_{b-1}$,
\item
a list of primes $p_1,\dots,p_c$, and
\item
a list of distinct cut points $b_1,\dots,b_c$
\end{itemize}
with the aim of computing $C_n \colonequals A_0 \cdots A_{b_n-1} \bmod p_n$
for $1 \le n < c$.
This reduces to Algorithm~\ref{alg:acc_rem_tree} by suitably grouping terms; see Algorithm~\ref{alg:acc_rem_tree_with_spacing}.
(One may also handle repeated cut points,
as long as the cut points up to $X$ occur at most $O(X)$ times.)

\begin{algorithm}
\DontPrintSemicolon
\SetKwProg{Fn}{def}{\string:}{}
\SetKwInOut{Input}{Input}
\SetKwInOut{Output}{Output}
\SetKwFunction{RemTree}{RemTree}
\SetKwFunction{RemTreeWithSpacing}{RemTreeWithSpacing}

\Fn{\RemTreeWithSpacing{{$\{A_{i}\}$, $\{p_i\}$}, $\{b_i\}$}}{
\Input{$A_0, \ldots, A_{b-1}, p_1, \ldots, p_{c}, b_1,\ldots,b_c$ as in Section~\ref{sec:synchronization}}
\Output{$C_1, \dots C_{c-1}$}

$\ell \colonequals \lceil \log_2(b) \rceil$\;

\For{$j \colonequals b$ \KwTo $2^\ell-1$}{
    $A_j \colonequals 0$\;
}

\For{$j \colonequals 0$ \KwTo $2^\ell-1$}{
    $p'_j \colonequals 1$\;
}

\For{$i \colonequals 1$ \KwTo $c$}{
  $p'_{b_i} \colonequals p_i$\;
}
$C'_i \colonequals \RemTree{$\{A_i\}, \{p'_i\}$}$\;
\Return{ $\{C'_{b_i}\}_{i=0,\dots,c-1}$ }

\caption{Accumulating Remainder Tree with Spacing}\label{alg:acc_rem_tree_with_spacing}
}
\end{algorithm}

\begin{remark} \label{rmk:remainder forest}
In practice, we will split our products to work around discontinuities of \eqref{eq:Hq}
(see Section~\ref{subsection:matrix product}). One gains some savings (particularly in space complexity)
by splitting a bit further, replacing remainder trees with \emph{remainder forests}
\cite[Theorem~4.2]{harvey-sutherland-14}; we omit the details here.
\end{remark}

\section{Nuts and bolts}

We record two technical lemmas used in the description of our algorithm.
For the rest of the paper, we make the simplifying assumption $q = p$ in Theorem \ref{thm:trace of Frobenius}.

\begin{lemma}\label{lem:shift}
Set $I_b \colonequals [0, 1] \cap \frac{1}{b}\Z$.
Suppose $\gamma \in I_b$ and $p$ is a prime not dividing $b$.
Let $m = \lfloor \gamma (p-1) \rfloor$.  Then there exist $\delta \in I_b$ and $\epsilon \in \{1, 2\}$ so that
\[
m + \epsilon \equiv \delta \pmod{p}.
\]
Moreover, $\delta$ and $\epsilon$ only depend on $b$, $\gamma$, and $p \pmod{b}$.
\end{lemma}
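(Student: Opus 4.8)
The plan is to compute $m = \lfloor \gamma(p-1) \rfloor$ modulo $p$ explicitly in terms of the numerator and denominator of $\gamma$, and then absorb the correction into a new element of $I_b$. Write $\gamma = a/b$ with $0 \le a \le b$ (allowing non-reduced fractions so that the denominator is literally $b$). Since $p \nmid b$, division with remainder gives $a(p-1) = ap - a = bq' + s$ for a unique $q' = \lfloor a(p-1)/b \rfloor = m$ and $0 \le s < b$; thus $bm = ap - a - s$, so $bm \equiv -a - s \pmod p$. Because $p \nmid b$, we may divide by $b$ in $\Z/p$: $m \equiv -(a+s)/b \pmod p$. Now $s$ is determined by $a \bmod b$ (it is the residue of $-a$, equivalently of $a(p-1)$, mod $b$), and $a + s$ lies in $[0, 2b)$, so $a + s = b$ or $a+s$ is one of the two values $\equiv 0 \pmod b$ in that range only when... — more carefully, $a+s \equiv a + (-a) \equiv 0 \pmod b$, so $a + s \in \{0, b\}$. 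Hence $-(a+s)/b \in \{0, -1\} \pmod p$, i.e. $m \equiv 0$ or $m \equiv -1 \pmod p$.

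Now set $\delta \colonequals a+s - b \cdot \lfloor (a+s)/b \rfloor$... that is not quite what we want; instead define $\epsilon$ and $\delta$ directly. If $a + s = 0$ (which forces $a = 0$, $s = 0$), take $\epsilon = 1$ and $\delta = 1 \in I_b$, since $m \equiv 0 \equiv 1 - 1 \pmod p$ gives $m + 1 \equiv 1$; wait, we need $m + \epsilon \equiv \delta$, and $m \equiv 0$ here, so $m + 1 \equiv 1 \pmod p$, and $1 \in I_b$, good, with $\epsilon = 1$. If $a + s = b$, then $m \equiv -1 \pmod p$, so $m + 1 \equiv 0 \in I_b$ works with $\epsilon = 1$; alternatively $m + 2 \equiv 1 \in I_b$. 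In every case we can choose $\epsilon \in \{1,2\}$ and $\delta \in I_b \subseteq \{0, 1/b, \dots, 1\}$ with $m + \epsilon \equiv \delta \pmod p$. (The room between $\epsilon = 1$ and $\epsilon = 2$ is what lets us keep $\delta$ inside $[0,1]$ rather than needing a value like $-1/b$; this is presumably why the statement allows $\epsilon \in \{1,2\}$ rather than pinning it down.)

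Finally, the dependence claim: $s$ — and hence the pair $(a+s \bmod b, \text{which of } \{0,b\})$ — depends only on $a$ (i.e. on $\gamma$ and $b$) and on $p \bmod b$, since $s$ is the residue of $a(p-1)$ modulo $b$. The residue $m \bmod p$ is then $-(a+s)/b \bmod p$, but the choice of $\delta \in I_b$ and $\epsilon \in \{1,2\}$ realizing it is made in $\{0,1\}$ and depends only on whether $a + s = 0$ or $a+s = b$, which again depends only on $b$, $\gamma$, and $p \bmod b$.

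I expect the only real subtlety to be bookkeeping: being careful that $\gamma$ may equal $1$ (so $a = b$), that $\gamma = 0$ is allowed, and that the fraction is taken with denominator exactly $b$ rather than in lowest terms, so that the identity $a(p-1) = bm + s$ with $0 \le s < b$ is valid. Everything else is elementary division with remainder and inversion of $b$ modulo $p$.
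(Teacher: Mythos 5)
There is a genuine gap at the step where you conclude $a + s \equiv 0 \pmod{b}$. You parenthetically assert that $s$, the remainder of $a(p-1)$ upon division by $b$, ``is the residue of $-a$'' modulo $b$; but $a(p-1) \equiv -a \pmod{b}$ is equivalent to $ap \equiv 0 \pmod{b}$, and since $\gcd(p,b)=1$ this holds only when $b \mid a$, i.e., only when $\gamma$ is an integer. In general $s$ depends on $p \bmod b$ (as you correctly note at the very end), so $a+s$ can be any integer in $[0, 2b)$, not just $0$ or $b$, and $m \bmod p$ is \emph{not} restricted to $\{0, -1\}$; accordingly $\delta$ cannot always be taken in $\{0,1\}$. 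A concrete counterexample: take $\gamma = 1/3$ and $p = 7$, so $a = 1$, $b = 3$, $m = \lfloor 6/3 \rfloor = 2$, $s = 0$, and $a+s = 1 \notin \{0, 3\}$. Neither $m+1 = 3$ nor $m+2 = 4$ is congruent to $0$ or $1$ modulo $7$; the correct value is $\delta = 2/3 \in I_3$, since $2 \cdot 3^{-1} \equiv 2 \cdot 5 \equiv 3 \equiv m + 1 \pmod 7$.

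The repair stays within the framework you set up. From $bm = ap - (a+s)$ you get $b(m+\epsilon) \equiv b\epsilon - (a+s) \pmod{p}$, so you want $b\epsilon - (a+s) = b\delta$ with $0 \le b\delta \le b$. Since $0 \le a+s \le 2b-1$, choosing $\epsilon = 1$ when $a+s < b$ and $\epsilon = 2$ when $a+s \ge b$ puts the numerator $b\epsilon - (a+s)$ in $[0,b]$, hence $\delta \in I_b$; this is exactly the paper's construction. The dependence claim then goes through as you state it: $s = a(p-1) \bmod b$, and hence $\epsilon$ and $\delta$, are determined by $a$, $b$, and $p \bmod b$.
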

\begin{proof}
Write $\gamma = \frac{a}{b}$ and define an integer $r \in \{0,\dots,b-1\}$ by the condition that
\[
a(p-1) = mb + r.
\]
We then set
\[
\begin{cases}
\epsilon \colonequals 1, \delta \colonequals \frac{b - a - r}{b} & \mbox{if } a+r < b \\
\epsilon \colonequals 2, \delta \colonequals \frac{2b-a-r}{b} & \mbox{otherwise.}
\end{cases}
\]

Note that $b(\delta - \epsilon) = -(a + r) = mb - ap$ so $m + \epsilon \equiv \delta \pmod{p}$.
The fact that $\delta \in I_b$ follows from the bounds $0 \le a, r \le b$.
\end{proof}


\begin{lemma}\label{lem:zigzag_invariance}
Suppose $0 \le m < p - 1$ and either $\eta_m(\alpha) - \eta_m(\beta) \ne \eta_{m+1}(\alpha) - \eta_{m+1}(\beta)$ or $\xi_m(\beta) \ne \xi_{m+1}(\beta)$.  Then $ \lfloor \gamma (p-1) \rfloor \in \{m,m+1\}$ for some $\gamma \in \alpha \cup \beta$.
\end{lemma}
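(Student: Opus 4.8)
The plan is to compute explicitly how the quantity $\eta_m(\alpha) - \eta_m(\beta)$ and the quantity $\xi_m(\beta)$ change when $m$ is replaced by $m+1$, and then to observe that any such change forces $m$ or $m+1$ to be of the form $\lfloor \gamma(p-1) \rfloor$ for some $\gamma \in \alpha \cup \beta$. Since we have assumed $q = p$, the inner sum over $v$ in the definition of $\eta_m$ has only the term $v = 0$, so that $\eta_m(x_1, \dots, x_r) = \sum_{j=1}^r \bigl( \{ x_j - \tfrac{m}{p-1} \} - x_j \bigr)$, where we used $\{x_j\} = x_j$ since $x_j \in [0,1)$; likewise the condition $\beta_j + \tfrac{m}{1-q} = 0$ becomes simply $\beta_j = \tfrac{m}{p-1}$.

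The first step is the elementary identity
\[
\{ \gamma - \tfrac{m}{p-1} \} - \{ \gamma - \tfrac{m+1}{p-1} \} = \frac{1}{p-1} - \begin{cases} 1, & \text{if } \lfloor \gamma(p-1) \rfloor = m, \\ 0, & \text{otherwise,} \end{cases}
\]
valid for every $\gamma \in [0,1)$ and every integer $m$ with $0 \le m < p - 1$ (taking $p \ge 3$, the prime $2$ being trivial here). To prove it I would observe that the left-hand side equals $\tfrac{1}{p-1}$ minus the number of integers lying in the half-open interval $\bigl( \gamma - \tfrac{m+1}{p-1},\ \gamma - \tfrac{m}{p-1} \bigr]$. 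This interval has length $\tfrac{1}{p-1} < 1$, and the hypotheses $\gamma \in [0,1)$ and $0 \le m < p-1$ place it inside $(-1, 1)$; hence it contains an integer precisely when it contains $0$, which happens exactly when $\tfrac{m}{p-1} \le \gamma < \tfrac{m+1}{p-1}$, i.e.\ when $\lfloor \gamma(p-1) \rfloor = m$.

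Next I would sum this identity over $j = 1, \dots, r$, once with $\gamma = \alpha_j$ and once with $\gamma = \beta_j$, and take the difference; the constant contributions $\tfrac{r}{p-1}$ cancel, leaving
\[
\bigl( \eta_m(\alpha) - \eta_m(\beta) \bigr) - \bigl( \eta_{m+1}(\alpha) - \eta_{m+1}(\beta) \bigr) = \#\{ j : \lfloor \beta_j(p-1) \rfloor = m \} - \#\{ j : \lfloor \alpha_j(p-1) \rfloor = m \}.
\]
For the $\xi$ term, the reformulation above gives at once $\xi_m(\beta) - \xi_{m+1}(\beta) = \#\{ j : \beta_j = \tfrac{m+1}{p-1} \} - \#\{ j : \beta_j = \tfrac{m}{p-1} \}$, and $\beta_j = \tfrac{m}{p-1}$ (respectively $\beta_j = \tfrac{m+1}{p-1}$) forces $\lfloor \beta_j(p-1) \rfloor = m$ (respectively $m+1$).

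To conclude I would argue contrapositively: if $\lfloor \gamma(p-1) \rfloor \notin \{ m, m+1 \}$ for every $\gamma \in \alpha \cup \beta$, then every cardinality on the right-hand side of the two displays above vanishes, so $\eta_m(\alpha) - \eta_m(\beta) = \eta_{m+1}(\alpha) - \eta_{m+1}(\beta)$ and $\xi_m(\beta) = \xi_{m+1}(\beta)$, contradicting the hypothesis. I expect the only subtle point to be the bookkeeping in the elementary identity — namely verifying that the interval $\bigl( \gamma - \tfrac{m+1}{p-1},\ \gamma - \tfrac{m}{p-1} \bigr]$ can contain neither $1$ nor $-1$, which is exactly the role played by the two-sided bound $0 \le m < p - 1$ together with $\gamma \in [0,1)$.
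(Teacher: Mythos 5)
Your argument is correct and follows essentially the same route as the paper: reduce $\eta_m$ to a single sum of fractional parts because $q=p$, observe that $\{\gamma - \tfrac{m}{p-1}\}$ jumps exactly when $m = \lfloor \gamma(p-1)\rfloor$, and handle $\xi$ directly from its definition. The only difference is cosmetic: you make the fractional-part step fully rigorous via the explicit counting identity for integers in $(\gamma - \tfrac{m+1}{p-1}, \gamma - \tfrac{m}{p-1}]$ and then finish by contraposition, whereas the paper appeals more informally to the piecewise formula for $\{x-y\}$; the content is the same.
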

\begin{proof}
Since $q=p$, we have
\begin{equation}\label{eq:express eta}
\eta_m(\alpha) - \eta_m(\beta) = \sum_{j=1}^r \left(\left\{ \alpha_j - \tfrac{m}{p-1} \right\} - \{\alpha_j\} \right) - \sum_{j=1}^r \left( \left\{ \beta_j - \tfrac{m}{p-1} \right\} - \left\{\beta_j\right\} \right).
\end{equation}
For $x,y \in [0,1)$ we have
\begin{equation}\label{eq:fractional part difference}
\{x-y\} = \begin{cases} x-y & (x \geq y) \\
x-y+1 & (x < y).
\end{cases}
\end{equation}
Consequently, the only way for $\eta_m(\alpha) - \eta_m(\beta)$
to change values when $m$ goes to $m+1$ is
for there to exist $\gamma \in \alpha \cup \beta$ such that
\[
\gamma - \tfrac{m}{p-1} \geq 0, \qquad \gamma - \tfrac{m+1}{p-1} < 0.
\]
This occurs precisely when $m = \lfloor \gamma(p-1) \rfloor$.
Meanwhile, by \eqref{eq:xi}, $\xi_m(\beta) = \xi_{m+1}(\beta)$
unless $\beta_j = \frac{m}{p-1}$ or $\beta_j = \frac{m+1}{p-1} = 0$ for some $j$.
\end{proof}

\section{Computing trace functions of hypergeometric motives}

Throughout this section, fix $\alpha, \beta$ and $z$.
We now describe how to compute the trace $\Hp$ modulo $p$ in average polynomial time using equation~\eqref{eq:Hq}, which we duplicate here modulo $p$ for ease of reference:
\begin{equation}\label{eq:Hp mod p}
\Hpfull \equiv \sum_{m=0}^{p-2}(-p)^{\eta_m(\alpha)-\eta_m(\beta)} p^{D + \xi_m(\beta)}\left(\prod_{j=1}^r \frac{(\alpha_j)_m^*}{(\beta_j)_m^*}\right)z^m \pmod{p}.
\end{equation}

\subsection{Overview of the algorithm}

In order to apply Algorithm~\ref{alg:acc_rem_tree_with_spacing},
we would like to identify $2 \times 2$ integer matrices $B(m)$, such that we may extract $\Hvalue{p}{\alpha}{\beta}{z} \pmod{p}$ from $B(0) B(1) \cdots B(p-2)$.
In practice, we will consider shorter subproducts and choose $B(m)$ based on the residue of $p$ modulo a fixed integer (independent of $m$ and $p$);
we will then apply Algorithm~\ref{alg:acc_rem_tree_with_spacing} once for each subproduct and residue class.

As a first approximation, let us instead model the sum $\sum_{m=0}^{p-2} P_m$ where
\begin{equation}\label{eq:Pm_def}
P_m \colonequals z^m\prod_{j=1}^r \frac{(\alpha_j)_m^*}{(\beta_j)_m^*} \in \Zp^\times.
\end{equation}
If we can find $f(m), g(m) \in \Z[m]$ so that
\begin{equation}\label{eq:poch_recurrence}
P_{m+1} \equiv \frac{f(m)}{g(m)} P_m \pmod{p},
\end{equation}
we can then set
\begin{equation}
B(m) \colonequals \begin{pmatrix} g(m) & 0 \\ g(m) & f(m) \end{pmatrix} = g(m) \begin{pmatrix} 1 & 0 \\ 1 & f(m)/g(m) \end{pmatrix}
\end{equation}
and $\widetilde{B} = B(0) \dots B(p-2) \pmod{p}$, so that
\[
\widetilde{B} \equiv g(0) \cdots g(p-2) \begin{pmatrix} 1 & 0 \\
\sum_{m=0}^{p-2} P_m & P_{p-1}
\end{pmatrix} \pmod{p}
\]
and so $\sum_{m=0}^{p-2} P_m \equiv \widetilde{B}_{21}/\widetilde{B}_{11} \pmod{p}$.
That is, $\widetilde{B}_{11}$ tracks a common denominator, $\widetilde{B}_{22}$ tracks the product $P_m$,
and $\widetilde{B}_{12}$ computes the sum of the $P_m$.

There are two problems with the approach described above.
First, to correctly simulate ~\eqref{eq:Hp mod p} we must sum not $P_m$ but
\begin{equation}\label{eq:Pm_def with scale factor}
P'_m \colonequals(-p)^{\eta_m(\alpha) - \eta_m(\beta)} p^{D+\xi_m(\beta)} P_m,
\end{equation}
which we cannot directly handle by modifying $B(m)_{21}$ because the extra factor depends on both $p$ and $m$.
Second, while we can find polynomials $f$ and $g$ satisfying~\eqref{eq:poch_recurrence} for most values of $m$
using~\eqref{eq:poch_def} and the functional equation~\eqref{eq:gamma_feq}, there will be a few values of $m$ where $f(m)$ or $g(m)$ is a multiple of $p$.  We cannot filter these values out during the remainder tree because $p$ is not fixed.

The solution to both of these issues is to break up the range $[0, p-2]$ into intervals on which equation~\eqref{eq:poch_recurrence} holds and
the values $\eta_m(\alpha) - \eta_m(\beta)$ and $\xi_m(\beta)$ are constant.
The breaks between these intervals occur when $m = \lfloor \gamma (p-1) \rfloor$, where $\gamma \in \alpha \cup \beta$.
We thus use a separate accumulating remainder tree for each interval, yielding for each $p$ a fixed number of subproducts with isolated missing terms in between; we then compute separately for each $p$ to bridge the gaps.

A third issue is that while we can vary the endpoint in an accumulating remainder tree as a function of $p$
(as described in Section~\ref{sec:art}), it is more difficult to change the start point.
Our solution is to use Lemma~\ref{lem:shift} to find a rational number $\delta$ so that adding $\delta$ to each $\alpha_j$ and $\beta_j$ has the effect of
shifting the start point to $0$.

\subsection{Construction of the matrix product}
\label{subsection:matrix product}

We now construct the matrix product described above. We begin with the division of the interval $[0,p-1]$ and the division of primes into residue classes.  We assume that $q = p$ is good and not $2$.
\begin{definition}
Given a hypergeometric motive $M^{\alpha, \beta}_z$, let $0=\gamma_0 < \dots < \gamma_s = 1$ be the distinct elements in $\alpha \cup \beta \cup \{0, 1\}$.
Let $b$ be the least common denominator of $\alpha \cup \beta$ and fix $c \in (\Z/b \Z)^\times$.
Let $p$ be a prime congruent to $c$ modulo $b$ and not dividing the denominator of $z$.
Write $m_i$ for $\lfloor \gamma_i (p-1) \rfloor$.
\end{definition}

We next exhibit polynomials that we use to compute Pochhammer symbols and their partial sums on the interval $(\gamma_i, \gamma_{i+1})$.

\begin{definition}
Fix an interval $(\gamma_i, \gamma_{i+1})$, choose $\delta_i$ and $\epsilon_i$ associated to $\gamma_i$ as in Lemma~\ref{lem:shift}, and let
\begin{equation}
  \iota(x, y) \colonequals \begin{cases} 1 & x \le y \\ 0 & x > y. \end{cases}
\end{equation}
Define polynomials $f_{i, c}(k), g_{i, c}(k) \in \Z[k]$ as follows: set
\begin{equation}\label{eq:f and g}
  \begin{aligned}
    F_{i, c}(k) &\colonequals z \prod_{j=1}^r (\alpha_j +  \delta_i +  \iota(\alpha_j, \gamma_i) + k - \epsilon_i) \\
    G_{i, c}(k) &\colonequals \prod_{j=1}^r (\beta_j + \delta_i + \iota(\beta_j, \gamma_i)  + k- \epsilon_i),
  \end{aligned}
\end{equation}
let $d_{i, c}$ be the least common multiple of the denominators of $F_{i, c}$ and $G_{i, c}$, and set $f_{i, c}(k) \colonequals d_{i, c}F_{i, c}(k)$ and $g_{i, c}(k) \colonequals d_{i, c}G_{i, c}(k)$.
\end{definition}

\begin{lemma}\label{lem:top left entry}
Define $P_m$ as in~\eqref{eq:Pm_def}, and suppose $m_i < m < m_{i+1}$.  Then
\[
P_{m+1} \equiv \frac{f_{i, c}(k)}{g_{i, c}(k)} P_m \pmod{p},
\]
where $1 \leq k < m_{i+1} - m_i$ and $m = m_i + k$.
\end{lemma}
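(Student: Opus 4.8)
The plan is to unwind the $p$-adic Pochhammer symbols and reduce everything to the functional equation \eqref{eq:gamma_feq} for $\Gamma_p$, the delicate points being fractional-part ``wraparounds'' and a $p$-adic unit estimate. Since $q = p$ we have $\Gamma_p^*(x) = \Gamma_p(\{x\})$, hence $(\gamma)_m^* = \Gamma_p(\{\gamma - \tfrac{m}{p-1}\})/\Gamma_p(\gamma)$ for $\gamma \in [0,1)$, so
\[
\frac{P_{m+1}}{P_m} \;=\; z\prod_{j=1}^r \frac{\Gamma_p(\{\alpha_j - \tfrac{m+1}{p-1}\})\big/\Gamma_p(\{\alpha_j - \tfrac{m}{p-1}\})}{\Gamma_p(\{\beta_j - \tfrac{m+1}{p-1}\})\big/\Gamma_p(\{\beta_j - \tfrac{m}{p-1}\})}.
\]
It therefore suffices to show that for every $\gamma \in \alpha\cup\beta$ one has
\[
\frac{\Gamma_p(\{\gamma - \tfrac{m+1}{p-1}\})}{\Gamma_p(\{\gamma - \tfrac{m}{p-1}\})} \;\equiv\; -\bigl(\gamma + \delta_i + \iota(\gamma,\gamma_i) + k - \epsilon_i\bigr) \pmod p,
\]
because then the $2r$ minus signs cancel in the product, and multiplying numerator and denominator by $d_{i,c}$ (which is prime to $p$) converts $F_{i,c}/G_{i,c}$ into $f_{i,c}/g_{i,c}$ without affecting the congruence.

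The first substantive step is the fractional-part accounting. Since $\gamma_i,\gamma_{i+1}$ are consecutive in $\alpha\cup\beta\cup\{0,1\}$, each $\gamma\in\alpha\cup\beta$ satisfies $\gamma\le\gamma_i$ or $\gamma\ge\gamma_{i+1}$, hence $\lfloor\gamma(p-1)\rfloor\le m_i<m$ or $\lfloor\gamma(p-1)\rfloor\ge m_{i+1}>m$; in particular $m\ne\lfloor\gamma(p-1)\rfloor$. A short real-variable argument then shows that no integer is crossed as the argument passes from $\gamma-\tfrac{m}{p-1}$ to $\gamma-\tfrac{m+1}{p-1}$, so $\{\gamma-\tfrac{m+1}{p-1}\}=\{\gamma-\tfrac{m}{p-1}\}-\tfrac1{p-1}$, and the number of unit translates carrying $\gamma-\tfrac{m}{p-1}$ into $[0,1)$ is exactly $\iota(\gamma,\gamma_i)$, so $\{\gamma-\tfrac{m}{p-1}\}=\gamma-\tfrac{m}{p-1}+\iota(\gamma,\gamma_i)$. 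Reducing mod $p$ with $\tfrac1{p-1}\equiv-1$, then using $m=m_i+k$ and $m_i\equiv\delta_i-\epsilon_i\pmod p$ from Lemma~\ref{lem:shift}, I get
\[
\Bigl\{\gamma-\tfrac{m}{p-1}\Bigr\}\equiv \gamma+\delta_i+\iota(\gamma,\gamma_i)+k-\epsilon_i \pmod p,\qquad
\Bigl\{\gamma-\tfrac{m+1}{p-1}\Bigr\}\equiv\Bigl\{\gamma-\tfrac{m}{p-1}\Bigr\}+1\pmod p.
\]

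Now apply \eqref{eq:Lipschitz continuity}, valid as $p\ne 2$: the arguments above are rationals with denominator dividing $b(p-1)$, hence lie in $\Z_p$, so $\Gamma_p(\{\gamma-\tfrac{m+1}{p-1}\})\equiv\Gamma_p(\{\gamma-\tfrac{m}{p-1}\}+1)\pmod p$, and \eqref{eq:gamma_feq} rewrites the right-hand side as $\omega(\{\gamma-\tfrac{m}{p-1}\})\,\Gamma_p(\{\gamma-\tfrac{m}{p-1}\})$. Dividing by the unit $\Gamma_p(\{\gamma-\tfrac{m}{p-1}\})$ leaves us needing $\omega(\{\gamma-\tfrac{m}{p-1}\})\equiv-(\gamma+\delta_i+\iota(\gamma,\gamma_i)+k-\epsilon_i)\pmod p$, which by the last display follows as soon as $\{\gamma-\tfrac{m}{p-1}\}$ is a $p$-adic unit, so that $\omega$ takes its $-x$ branch. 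This unit property is the heart of the matter. To prove $\{\gamma-\tfrac{m}{p-1}\}\notin p\Z_p$ for $m_i<m<m_{i+1}$ and $\gamma\in\alpha\cup\beta$, write $\gamma=a/e$ in lowest terms (so $e\mid b$, $p\nmid e$); if $\{\gamma-\tfrac{m}{p-1}\}\in p\Z_p$ then, since it lies in $[0,1)$ and has denominator dividing $e(p-1)$, it equals $\tfrac{pK'}{e(p-1)}$ for some $K'\in\{1,\dots,e-1\}$, and the identity $me=(a+\iota(\gamma,\gamma_i)e)(p-1)-pK'$ together with $0\le m\le p-2$ forces $m=\lfloor\gamma'(p-1)\rfloor$ with $\gamma'=\tfrac{ac^{-1}\bmod e}{e}$, again a reduced fraction of denominator $e$. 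By the balance (Galois-stability) hypothesis $\gamma'$ occurs in $\alpha\cup\beta$, so $\gamma'\in\{\gamma_0,\dots,\gamma_s\}$ and $m$ is a break point, contradicting $m_i<m<m_{i+1}$.

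I expect the main obstacle to be exactly this $p$-adic unit claim together with the wraparound accounting: that is where the hypothesis $m_i<m<m_{i+1}$, Lemma~\ref{lem:shift}, and the balance condition genuinely enter, and it is also what makes $P_{m+1}\equiv\tfrac{f_{i,c}(k)}{g_{i,c}(k)}P_m$ a well-posed congruence of $p$-adic units. The boundary case $m+1=m_{i+1}$ with $\gamma_{i+1}(p-1)\in\Z$, where $\{\gamma_{i+1}-\tfrac{m+1}{p-1}\}=0$ and $\Gamma_p(0)=1$, needs a separate glance but causes no trouble, since only the value at $m$ (namely $\tfrac1{p-1}$, a unit) is the argument of $\omega$. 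The remaining ingredients — unwinding \eqref{eq:poch_def}, cancelling the $2r$ signs, clearing denominators by $d_{i,c}$ — are routine.
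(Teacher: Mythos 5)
Your proof is correct and follows the same overall route as the paper's: unwind $(\gamma)_m^*$ via $\Gamma_q^* = \Gamma_p\circ\{\cdot\}$, account for the fractional-part wraparound to get $\{\gamma - \tfrac{m}{p-1}\} = \gamma - \tfrac{m}{p-1} + \iota(\gamma,\gamma_i)$ on the open interval, then use Lipschitz continuity \eqref{eq:Lipschitz continuity}, the functional equation \eqref{eq:gamma_feq}, and Lemma~\ref{lem:shift} to turn the step $m\mapsto m+1$ into multiplication by the linear factor $-(\gamma + \delta_i + \iota + k - \epsilon_i)$, after which the $r$ numerator signs cancel the $r$ denominator signs and $d_{i,c}$ clears denominators. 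Where you genuinely add something is the $p$-adic unit verification: the paper's chain \eqref{use of functional equation} silently takes the $\omega(x)=-x$ branch of \eqref{eq:gamma_feq}, which requires $\{\gamma-\tfrac{m}{p-1}\}\in\Z_p^\times$, and the paper does not justify this. You correctly identify this as the crux, and your argument that a non-unit forces $m=\lfloor\gamma'(p-1)\rfloor$ for $\gamma'=\{ac^{-1}/e\}$, together with the Galois-stability of $(\alpha,\beta)$ to place $\gamma'$ among the break points $\gamma_0,\dots,\gamma_s$, is exactly the right mechanism; it also pinpoints where the balance hypothesis and the restriction $m_i<m<m_{i+1}$ enter essentially rather than cosmetically. (One can verify your formula for the bad $m$: writing $\gamma=a/e$, $pK'+me=(a+\iota e)(p-1)$ forces $K'\equiv a-a'\pmod e$ with $a'\equiv ac^{-1}\pmod e$, and the bound $0\le m\le p-2$ then gives $m=\lfloor a'(p-1)/e\rfloor$ in every case.) So: same skeleton as the paper, but you have closed a real gap and explained why the interval decomposition was designed the way it was.
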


\begin{proof}
We first focus on a single Pochhammer symbol $(\alpha_j)_m^*$.
  First note that for
  $m_i < m \leq m_{i+1}$, by~\eqref{eq:fractional part difference} we have
  \begin{equation} \label{eq:effect of shift}
    \left\{\alpha_j + \tfrac{m}{1-p}\right\} =
    \alpha_j + \tfrac{m}{1-p} +
    \begin{cases}
       0 & m \leq \left\lfloor \alpha_j(p-1) \right\rfloor\\
       1 & m > \left\lfloor \alpha_j(p-1) \right\rfloor
     \end{cases} = \alpha_j + \tfrac{m}{1-p} + \iota(\alpha_j, \gamma_i).
  \end{equation}
  Combining \eqref{eq:effect of shift} with Lipschitz continuity~\eqref{eq:Lipschitz continuity} and the functional equation for $\Gamma_p$~\eqref{eq:gamma_feq} and Lemma~\ref{lem:shift}, for $m_i < m < m_{i+1}$ we obtain
\begin{equation}\label{use of functional equation}
  \begin{aligned}
    \Gamma_p\left(\left\{\alpha_j + \tfrac{m+1}{1-p} \right\}\right)
    & \equiv
    \Gamma_p\left(\alpha_j + m + 1 + \iota(\alpha_j, \gamma_i)\right) \\
    &  = -(\alpha_j + m + \iota(\alpha_j, \gamma_i))
    \Gamma_p(\alpha_j + m + \iota(\alpha_j, \gamma_i)) \\
    &
    \equiv -(\alpha_j + \delta_i + \iota(\alpha_j, \gamma_i)  + k - \epsilon_i)
    \Gamma_p\left(\left\{\alpha_j + \tfrac{m}{1-p} \right\}\right) \pmod{p}.
  \end{aligned}
\end{equation}
Taking the product over all the Pochhammer symbols, the minus sign cancels out, and we obtain
the equation~\eqref{eq:f and g}, as desired.
\end{proof}

We next account for the power of $p$ in the product, and assemble a matrix product that computes the sum between two breaks.
\begin{definition}\label{def:sigma i}
Let $\xi(\beta) = \#\{j : \beta_j = 0\}$ and
\begin{equation}\label{eq:sigmai}
\sigma_i \colonequals \begin{cases}
1 & Z_{\alpha, \beta}(\gamma_i) + \xi(\beta) + D= 0 \mbox{ and } Z_{\alpha, \beta}(\gamma_i) \equiv 0 \pmod{2} \\
-1 & Z_{\alpha, \beta}(\gamma_i) + \xi(\beta) + D= 0 \mbox{ and } Z_{\alpha, \beta}(\gamma_i) \equiv 1 \pmod{2} \\
0 & \mbox{otherwise.}
\end{cases}
\end{equation}
By Lemma~\ref{lem:zigzag_invariance}, $\sigma_i$ gives the value of $(-p)^{\eta_m(\alpha) - \eta_m(\beta)} p^{\xi_m(\beta) + D} \bmod{p}$ for all $m$ with $m_i < m < m_{i+1}$. Now set
\begin{equation}\label{eq:Aic}
A_{i, c}(k) \colonequals \left(\begin{matrix} g_{i, c}(k) & 0 \\ \sigma_i g_{i, c}(k) & f_{i, c}(k) \end{matrix} \right).
\end{equation}
Since $A_{i, c}(k)$ depends only on $c$ and not $p$, we can use an accumulating remainder tree for each $c$ to compute
\begin{equation}\label{eq:Sip}
S_i(p) \colonequals A_{i, c}(1) A_{i, c}(2) \cdots A_{i, c}(m_{i+1} - m_i  - 1) \pmod{p}.
\end{equation}
\end{definition}

\begin{lemma}\label{lem:single interval}
For $P'_m$ as defined in~\eqref{eq:Pm_def with scale factor},
\begin{equation}
S_i(p)_{11}^{-1} S_i(p) \equiv \begin{pmatrix}
1 & 0 \\
\sum_{m=m_i+1}^{m_{i+1}-1} P'_{m} / P_{m_i + 1} & P_{m_{i+1}} / P_{m_i + 1}
\end{pmatrix} \pmod{p}.
\end{equation}
\end{lemma}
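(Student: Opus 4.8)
The plan is to compute the matrix product $S_i(p) = \prod_{k=1}^{m_{i+1}-m_i-1} A_{i,c}(k)$ explicitly from the triangular structure of the factors, then match the entries against the quantities in the claim. Writing $A_{i,c}(k) = g_{i,c}(k)\left(\begin{smallmatrix} 1 & 0 \\ \sigma_i & f_{i,c}(k)/g_{i,c}(k) \end{smallmatrix}\right)$, and abbreviating $\phi_k \colonequals f_{i,c}(k)/g_{i,c}(k)$, a straightforward induction on the number of factors shows that
\[
\prod_{k=1}^{N} \begin{pmatrix} 1 & 0 \\ \sigma_i & \phi_k \end{pmatrix}
= \begin{pmatrix} 1 & 0 \\ \sigma_i\bigl(1 + \phi_1 + \phi_1\phi_2 + \cdots + \phi_1\cdots\phi_{N-1}\bigr) & \phi_1\phi_2\cdots\phi_N \end{pmatrix}.
\]
Pulling out the scalar $\prod_{k=1}^{N} g_{i,c}(k)$, this scalar is exactly $S_i(p)_{11}$ (working mod $p$), so $S_i(p)_{11}^{-1}S_i(p)$ is the displayed unipotent-times-diagonal matrix above, provided $S_i(p)_{11}$ is a unit mod $p$ — which holds on the interval $m_i < m < m_{i+1}$ precisely because, by Lemma~\ref{lem:top left entry} and the discussion preceding it, the $g_{i,c}(k)$ never vanish mod $p$ there (the troublesome values of $m$ have been excised into the breakpoints).

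It then remains to identify the two nontrivial entries with the claimed ratios. For the $(2,2)$ entry, telescope Lemma~\ref{lem:top left entry}: with $m = m_i + k$ we have $P_{m+1} \equiv \phi_k P_m \pmod p$ for $1 \le k < m_{i+1}-m_i$, so $\prod_{k=1}^{N}\phi_k \equiv P_{m_i+1+N}/P_{m_i+1}$, and taking $N = m_{i+1}-m_i-1$ gives $P_{m_{i+1}}/P_{m_i+1}$ as required. For the $(2,1)$ entry, the partial products in the sum above are $\phi_1\cdots\phi_{k-1} \equiv P_{m_i+k}/P_{m_i+1}$, so the bracketed sum equals $\sum_{k=1}^{N} P_{m_i+k}/P_{m_i+1} = \sum_{m=m_i+1}^{m_{i+1}-1} P_m / P_{m_i+1}$. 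Multiplying by $\sigma_i$ and invoking Definition~\ref{def:sigma i} — which asserts that $\sigma_i \equiv (-p)^{\eta_m(\alpha)-\eta_m(\beta)} p^{\xi_m(\beta)+D} \pmod p$ for every $m$ strictly between the breakpoints, via Lemma~\ref{lem:zigzag_invariance} — converts each $P_m$ into $P'_m$ as defined in~\eqref{eq:Pm_def with scale factor}, yielding $\sum_{m=m_i+1}^{m_{i+1}-1} P'_m / P_{m_i+1}$.

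The only genuinely delicate point is the validity of Lemma~\ref{lem:top left entry} at the extreme indices: the telescoping needs $P_{m+1} \equiv \phi_k P_m$ for $k$ running up to $m_{i+1}-m_i-1$, i.e. for $m$ up to $m_{i+1}-1$, and one must check this is consistent with the range $m_i < m < m_{i+1}$ in which~\eqref{eq:effect of shift} and the functional-equation manipulation~\eqref{use of functional equation} were justified — in particular that $\iota(\alpha_j,\gamma_i)$ correctly records whether $m > \lfloor\alpha_j(p-1)\rfloor$ throughout the open interval, which is exactly why the breakpoints were placed at the $\lfloor\gamma_j(p-1)\rfloor$. I expect this bookkeeping at the interval endpoints, together with confirming that $\sigma_i$ (defined purely in terms of the zigzag function at $\gamma_i$) really does match the $p$-power factor uniformly on the open interval, to be the main thing to verify carefully; the matrix induction itself is routine.
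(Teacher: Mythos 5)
Your proof is correct and follows essentially the same route as the paper's: establish the $(2,2)/(1,1)$ and $(2,1)/(1,1)$ ratios of the accumulated product via Lemma~\ref{lem:top left entry}, telescope, and then invoke Lemma~\ref{lem:zigzag_invariance} to identify $\sigma_i P_m$ with $P'_m$ on the open interval. You merely make the triangular-matrix induction explicit where the paper states the ratios directly, and you helpfully flag the need for $S_i(p)_{11}$ to be a unit mod $p$, which the paper leaves implicit.
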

\begin{proof}
By Lemma~\ref{lem:top left entry}, for $k=1,\dots,m_{i+1}-m_i-1$,
\[
\frac{(A_{i,c}(1) \cdots A_{i,c}(k))_{22}}{(A_{i,c}(1) \cdots A_{i,c}(k))_{11}}
\equiv \frac{P_{m_i+k+1}}{P_{m_i+1}} \pmod{p}
\]
and hence
\[
\frac{(A_{i,c}(1) \cdots A_{i,c}(k))_{21}}{(A_{i,c}(1) \cdots A_{i,c}(k))_{11}}
\equiv \sigma_i \sum_{l=1}^{k} \frac{P_{m_i+l}}{P_{m_i+1}}
\pmod{p}.
\]
Taking $k = m_{i+1}-m_i- 1$, then applying Lemma~\ref{lem:zigzag_invariance} to replace
$\sigma_i$ with $P'_m/P_m$,
yields the desired result.
\end{proof}

It remains to deal with the breaks.
Since the number of breaks is independent of $p$, we have the luxury of computing matrices $T_i(p)$ separately for each $p$ that move the Pochhammer symbols and partial sums past the break $\gamma_i$.
\begin{definition}\label{def:htau}
With $\omega$ defined as in~\eqref{eq:gamma_feq}, let
\begin{align}
h_i(\gamma, p) &\colonequals \begin{cases}
\omega(\gamma + m_i + 1) & \mbox{if } \gamma(p-1) < m_i \\
\omega(\gamma + m_i) & \mbox{if } \gamma(p-1) \ge m_i +1 \\
\omega(\gamma + m_i + 1)\omega(\gamma + m_i) & \mbox{otherwise}
\end{cases} \\
\tau_i &\colonequals \begin{cases}
0 & \gamma_i = 0 \\
1 & Z_{\alpha, \beta}(\gamma_{i-1}) + \xi_{m_i}(\beta)  + D= 0 \mbox{ and } Z_{\alpha, \beta}(\gamma_{i-1}) \equiv 0 \pmod{2} \\
-1 & Z_{\alpha, \beta}(\gamma_{i-1}) + \xi_{m_i}(\beta) + D = 0 \mbox{ and } Z_{\alpha, \beta}(\gamma_{i-1}) \equiv 1 \pmod{2} \\
0 & \mbox{otherwise}
\end{cases}
\end{align}
and then set
\begin{align}\label{eq:Tip}
T_i(p) &\colonequals \begin{pmatrix} 1 & 0 \\ \tau_i & z \prod_{j=1}^r \frac{h_i(\alpha_j, p)}{h_i(\beta_j, p)} \end{pmatrix} \\
S(p) &\colonequals \prod_{i = 0}^{s-1} T_i(p) S_i(p).
\end{align}
Note that modulo $p$, $T_i(p)$ is congruent to a matrix that depends on $p$ only via $c$.
\end{definition}

\begin{lemma}\label{lem:single interval2}
For suitable choices of scalars, we have
\begin{align*}
\prod_{j=0}^{i-1} T_j(p) S_j(p) &\equiv \mbox{(scalar)} \begin{pmatrix} 1 & 0 \\
\sum_{m=0}^{m_i-1} P'_m & P_{m_i} \end{pmatrix} \pmod{p} \\
\left( \prod_{j=0}^{i-1} T_j(p) S_j(p) \right)  T_i(p)
&\equiv \mbox{(scalar)} \begin{pmatrix} 1 & 0 \\
\sum_{m=0}^{m_i} P'_m & P_{m_i+1} \end{pmatrix} \pmod{p}.
\end{align*}
\end{lemma}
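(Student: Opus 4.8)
The plan is to prove both congruences simultaneously by induction on $i$, tracking how each factor $T_j(p) S_j(p)$ transforms a matrix of the shape $\left(\begin{smallmatrix} 1 & 0 \\ \Sigma & P \end{smallmatrix}\right)$ (up to an overall scalar) into the next one of the same shape. The base case $i = 0$ is trivial: since $\gamma_0 = 0$, we have $m_0 = 0$, the empty product $\prod_{j=0}^{-1}$ is the identity, which is already $\left(\begin{smallmatrix} 1 & 0 \\ 0 & 1 \end{smallmatrix}\right) = \left(\begin{smallmatrix} 1 & 0 \\ \sum_{m=0}^{-1} P'_m & P_0 \end{smallmatrix}\right)$ (noting $P_0 = 1$ by the convention that $(\alpha_j)_0^* = (\beta_j)_0^* = 1$), and $\tau_0 = 0$ by definition, so multiplying by $T_0(p)$ on the right just scales the bottom-right entry without introducing anything in position $(2,1)$ beyond the empty sum.

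For the inductive step, I would split it into two halves matching the two displayed lines. Assume the first congruence holds for index $i$, i.e.\ $\prod_{j=0}^{i-1} T_j(p) S_j(p) \equiv \lambda \left(\begin{smallmatrix} 1 & 0 \\ \sum_{m=0}^{m_i - 1} P'_m & P_{m_i} \end{smallmatrix}\right)$ for some scalar $\lambda$. Multiplying a lower-triangular matrix $\left(\begin{smallmatrix} 1 & 0 \\ u & v \end{smallmatrix}\right)$ on the right by $T_i(p) = \left(\begin{smallmatrix} 1 & 0 \\ \tau_i & w \end{smallmatrix}\right)$ (where $w = z\prod_j h_i(\alpha_j,p)/h_i(\beta_j,p)$) gives $\left(\begin{smallmatrix} 1 & 0 \\ u + v\tau_i & vw \end{smallmatrix}\right)$. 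So I must check two things: first, that $v w = P_{m_i} \cdot w$ equals $P_{m_i + 1}$ up to the ambient scalar — this is exactly the ``move past the break'' computation, using the functional equation \eqref{eq:gamma_feq} together with \eqref{eq:effect of shift}-style case analysis on whether $\gamma_i(p-1) < m_i$, $\geq m_i + 1$, or lands exactly at a break, which is precisely why $h_i(\gamma, p)$ has three cases; and second, that $v \tau_i = P_{m_i} \tau_i$ equals (up to scalar) the single missing term $P'_{m_i}$, so that $u + v\tau_i$ picks up $\sum_{m=0}^{m_i - 1} P'_m + P'_{m_i} = \sum_{m=0}^{m_i} P'_m$. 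The latter requires checking that $\tau_i$, as defined via $Z_{\alpha,\beta}(\gamma_{i-1})$, $\xi_{m_i}(\beta)$, and $D$, records exactly the power-of-$p$ scaling factor $(-p)^{\eta_{m_i}(\alpha)-\eta_{m_i}(\beta)} p^{\xi_{m_i}(\beta)+D} \bmod p$ appearing in $P'_{m_i}/P_{m_i}$ — and this is where one invokes \eqref{eq:express eta}, \eqref{eq:fractional part difference}, and the observation (in the spirit of Lemma~\ref{lem:zigzag_invariance}) that at $m = m_i$ the relevant count is $Z_{\alpha,\beta}(\gamma_{i-1})$ rather than $Z_{\alpha,\beta}(\gamma_i)$ because we are still approaching the break from the left. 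Then, multiplying the resulting matrix on the right by $S_i(p)$ and invoking Lemma~\ref{lem:single interval} — which says $S_i(p)_{11}^{-1} S_i(p)$ has bottom row $\left(\sum_{m=m_i+1}^{m_{i+1}-1} P'_m/P_{m_i+1},\ P_{m_{i+1}}/P_{m_i+1}\right)$ — carries $\left(\begin{smallmatrix} 1 & 0 \\ \sum_{m=0}^{m_i} P'_m & P_{m_i+1} \end{smallmatrix}\right)$ to (a scalar times) $\left(\begin{smallmatrix} 1 & 0 \\ \sum_{m=0}^{m_{i+1}-1} P'_m & P_{m_{i+1}} \end{smallmatrix}\right)$, closing the induction. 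I would package the second displayed congruence as the intermediate stage $\left(\prod_{j=0}^{i-1} T_j(p)S_j(p)\right) T_i(p)$ reached halfway through this step.

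The main obstacle, and the only place real care is needed, is the bookkeeping for the power of $p$ at the break — verifying that $\tau_i$ is correct. Away from breaks Lemma~\ref{lem:zigzag_invariance} and $\sigma_i$ already handle the constant exponent on each open interval, but at $m = m_i$ exactly one of the $\gamma_j$'s crosses, so the exponent can change by $\pm 1$, and one must confirm that the sign in $\sigma_{i-1}$ versus the sign encoded in $\tau_i$ (both keyed to $Z_{\alpha,\beta}(\gamma_{i-1})$, with the parity case distinction accounting for the sign of $(-p)^{(\cdot)}$) combine consistently, and that the condition ``$Z_{\alpha,\beta}(\gamma_{i-1}) + \xi_{m_i}(\beta) + D = 0$'' is exactly when the factor is a $p$-adic unit rather than divisible by $p$. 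A secondary, more mechanical point is confirming that the three-case definition of $h_i(\gamma,p)$ reproduces the ratio $\Gamma_p$-functional-equation shift correctly in the boundary case $\gamma(p-1) \in \{m_i\}$ i.e.\ when $\gamma$ is itself one of the $\gamma_i$ and $p \equiv 1 \pmod b$ makes $\gamma(p-1)$ an integer; here one uses that $\omega$ takes the value $-1$ rather than $-x$ on $p\Z_p$. Everything else is the routine propagation of the $\left(\begin{smallmatrix} 1 & 0 \\ \ast & \ast \end{smallmatrix}\right)$ shape through lower-triangular multiplication, with scalars absorbed freely since the final extraction only uses the ratio of the $(2,1)$ and $(1,1)$ entries.
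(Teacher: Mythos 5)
Your proof is correct and is exactly the induction that the paper's one-line proof (``This follows by induction on $i$ using Lemma~\ref{lem:single interval}'') refers to: you correctly split each inductive step into the $T_i(p)$ half (crossing the break, with $\tau_i$ supplying $P'_{m_i}/P_{m_i}\bmod p$ and $h_i$ supplying $P_{m_i+1}/P_{m_i}\bmod p$ via the $\Gamma_p$ functional equation) and the $S_i(p)$ half (invoking Lemma~\ref{lem:single interval}), and you verify the base case and the lower-triangular bookkeeping. The only slight imprecision is your description of the third case of $h_i$: it occurs whenever $m_i \le \gamma(p-1) < m_i+1$ (i.e.\ whenever $\gamma = \gamma_i$), not only in the sub-case $p \equiv 1 \pmod b$ where $\gamma(p-1)$ is an integer and $\omega$ evaluates to $-1$.
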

\begin{proof}
This follows by induction on $i$ using Lemma~\ref{lem:single interval}.
\end{proof}

Summing up, we obtain the following.
\begin{proposition}\label{prop:mod p calculation}
For $p \equiv c \pmod{b}$ not dividing the denominator of $z$,
\[
\Hpfull \equiv S(p)_{21} / S(p)_{11} \pmod{p}.
\]
\end{proposition}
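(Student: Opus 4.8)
The plan is to read off the proposition from the $i=s$ case of Lemma~\ref{lem:single interval2}. Since $\gamma_s = 1$, the cut point $m_s = \lfloor \gamma_s(p-1)\rfloor$ equals $p-1$, and by \eqref{eq:Tip} the product $\prod_{j=0}^{s-1} T_j(p) S_j(p)$ is exactly $S(p)$. So the first congruence of Lemma~\ref{lem:single interval2}, applied with $i=s$, reads
\[
S(p) \equiv \lambda \begin{pmatrix} 1 & 0 \\ \sum_{m=0}^{p-2} P'_m & P_{p-1} \end{pmatrix} \pmod{p}
\]
for the ``(scalar)'' $\lambda$ produced by that lemma.

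The next step is to recognize the $(2,1)$ entry. Combining the definition \eqref{eq:Pm_def with scale factor} of $P'_m$ with \eqref{eq:Pm_def}, the term $P'_m$ is precisely the $m$-th summand of \eqref{eq:Hp mod p}, so $\sum_{m=0}^{p-2} P'_m \equiv \Hpfull \pmod{p}$. Reading off entries of the displayed matrix gives $S(p)_{11} \equiv \lambda$ and $S(p)_{21} \equiv \lambda \cdot \Hpfull$; hence $\Hpfull \equiv S(p)_{21}/S(p)_{11} \pmod{p}$, \emph{provided} $\lambda$ is a unit modulo $p$. Establishing that invertibility is the only remaining point.

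To see that $\lambda = S(p)_{11}$ is prime to $p$, one traces through the induction in Lemma~\ref{lem:single interval2}: $\lambda$ is a product of the scalars $S_i(p)_{11}$ and $T_i(p)_{22}$. Each $S_i(p)_{11} = \prod_{k} g_{i,c}(k)$ is invertible mod $p$ --- this is built into the statements of Lemmas~\ref{lem:top left entry} and~\ref{lem:single interval}, and ultimately comes from $P_m, P_{m+1} \in \Zp^\times$ (because $p$ is good, so $z \in \Zp^\times$, and every Pochhammer symbol $(x)_m^*$ is a product of $\Gamma_p$-values, hence in $\Zp^\times$) together with $p \nmid d_{i,c}$ (the scaling factor $d_{i,c}$ divides a product of a power of $b$ and the denominator of $z$, both coprime to $p$). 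Likewise each $T_i(p)_{22} = z\prod_j h_i(\alpha_j,p)/h_i(\beta_j,p)$ is a unit: $z \in \Zp^\times$, and by \eqref{eq:gamma_feq} every value of $\omega$ --- hence each $h_i(\gamma,p)$, whose arguments $\gamma + m_i$ and $\gamma + m_i + 1$ lie in $\Zp$ since $\gamma \in I_b$ and $p \nmid b$ --- lies in $\Zp^\times$. Multiplying these together shows $\lambda \in (\Z/p\Z)^\times$.

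I do not anticipate a genuine obstacle: the substantive work (the mod-$p$ Pochhammer recurrence, the power-of-$p$ bookkeeping via the zigzag function, and the telescoping across the breaks) has already been done in Lemmas~\ref{lem:top left entry}--\ref{lem:single interval2}. What is left is the end-of-interval observation $m_s = p-1$ and the routine verification that the common-denominator scalar is coprime to $p$ --- the one spot where goodness of $p$ and $p \nmid b$ are used in an essential way.
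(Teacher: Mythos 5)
Your proof takes exactly the paper's approach---the paper's own proof is the one-liner ``This follows from \eqref{eq:Hp mod p} and the case $i=s$ of Lemma~\ref{lem:single interval2}''---and you correctly unpack it: noting $m_s = \lfloor 1\cdot(p-1)\rfloor = p-1$ so $\prod_{j=0}^{s-1}T_j(p)S_j(p) = S(p)$, identifying $\sum_{m=0}^{p-2}P'_m$ with the right-hand side of \eqref{eq:Hp mod p}, and supplying the (implicit in the paper) verification that the normalizing scalar is prime to $p$. One small inaccuracy in the last step: since all the matrices $A_{i,c}(k)$ and $T_i(p)$ are lower-triangular and each $T_i(p)$ has $(1,1)$-entry equal to $1$, the scalar $\lambda = S(p)_{11}$ is the product of the $S_i(p)_{11}$'s alone; the entries $T_i(p)_{22}$ move into the $(2,2)$-slot and never reach the $(1,1)$-slot, so they do not enter $\lambda$. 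This does not affect your conclusion, which rests on each $S_i(p)_{11} = \prod_k g_{i,c}(k)$ being a unit, and that unit-ness is indeed what Lemmas~\ref{lem:top left entry} and~\ref{lem:single interval} tacitly assert.
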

\begin{proof}
This follows from~\eqref{eq:Hp mod p} and the case $i=s$ of Lemma~\ref{lem:single interval2}.
\end{proof}

\subsection{Algorithm and runtime}

We summarize with Algorithm \ref{alg:traces}.

\begin{algorithm}[h]
\DontPrintSemicolon
\SetKwProg{Fn}{def}{\string:}{}
\SetKwInOut{Input}{Input}
\SetKwInOut{Output}{Output}
\SetKw{And}{and}
\SetKwData{Gammas}{gamma}
\SetKwData{Result}{result}
\SetKwData{Start}{start}
\SetKwData{End}{end}
\SetKwData{Cut}{cut}
\SetKwData{Mats}{mats}
\SetKwData{Primes}{primes}
\SetKwData{a}{a}
\SetKwData{b}{b}
\SetKwData{v}{v}
\SetKwData{cc}{c}
\SetKwFunction{Sorted}{Sorted}
\SetKwFunction{Set}{Set}
\SetKwFunction{FixBreak}{FixBreak}
\SetKwFunction{Denominator}{Denominator}
\SetKwFunction{IdMat}{IdentityMatrix}
\SetKwFunction{Matrices}{Matrices}
\SetKwFunction{RemTree}{RemTree}
\SetKwFunction{RationalShift}{RationalShift}
\SetKwFunction{Traces}{Traces}

\Fn{\Traces{$ \alpha, \beta, z, X $}}{
  \Input{$\alpha, \beta \in \bigr(\Q \cap [0, 1)\bigl)^r$, $z \in \Q$ and a bound $X$}
\Output{$\Hvalue{p}{\alpha}{\beta}{z} \pmod{p}$ for all good $p \le X$}
\If{$0 \in \alpha$}{
    $\alpha, \beta, z \colonequals \beta, \alpha, 1/z$\;
}
\Gammas $\colonequals$ \Sorted{ \Set{ $\alpha \cup \beta \cup \{0, 1\}$ } }\;
\For{good primes $p \le X$}{
    $\Result[p] \colonequals \IdMat{$2$}$\;
}
\For{\Start, \End consecutive elements of \Gammas}{
    $\b \colonequals \Denominator{\Start}$\;
    \For{$\cc \in (\Z /\mbox{\b}\Z)^\times$}{
      $\delta, \epsilon \colonequals $ \RationalShift{\Start, \cc} \tcp*[r]{Using Lemma~\ref{lem:shift}}
      \Mats $\colonequals$ \Matrices{$z$, \Start, $\delta$, $\epsilon$} \tcp*[r]{As in~\eqref{eq:Aic}}
        \Cut $\colonequals$ ($p \mapsto \lfloor \mbox{\End} \cdot (p-1) \rfloor - \lfloor \mbox{\Start} \cdot (p-1) \rfloor$)\;
        \Primes $\colonequals$ \{ good primes $p \equiv c \pmod{b}$, $p \le X$\}\;
        $\{C_i\} \colonequals$ \RemTreeWithSpacing{\Mats, \Primes, \Cut}\;
        \For{$i \colonequals 0, \ldots,  \#\Primes - 1$}{
          $p \colonequals \Primes[i]$\;
            $\Result[p] \colonequals \Result[p] \cdot \FixBreak{z, \Start, p}$ \tcp{As in~\eqref{eq:Tip}}
            $\Result[p] \colonequals \Result[p] \cdot C_i$\;
        }
    }
}
\For{good primes $p \le X$}{
    $\Result[p] \colonequals \Result[p]_{21} / \Result[p]_{11} \pmod{p}$\;
}
\Return{\Result}
}
\caption{Trace mod $p$}
\label{alg:traces}
\end{algorithm}

\begin{theorem} \label{T:main algorithm}
For fixed $\alpha, \beta$, Algorithm~\ref{alg:traces} is correct and runs in time
\[
O(X\log(X)^3).
\]
\end{theorem}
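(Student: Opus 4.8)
The plan is to establish correctness first, then bound the running time by carefully tallying the calls to the accumulating remainder tree and invoking Theorem~\ref{thm:acc_rem_tree analysis}. For correctness: the algorithm loops over consecutive pairs $(\gamma_i, \gamma_{i+1})$ from the sorted list $\alpha \cup \beta \cup \{0,1\}$ and over residue classes $c \in (\Z/b\Z)^\times$, builds the matrices $A_{i,c}(k)$ of \eqref{eq:Aic} via \texttt{Matrices}, and calls \texttt{RemTreeWithSpacing} with cut function $p \mapsto m_{i+1} - m_i$ to obtain exactly the products $S_i(p)$ of \eqref{eq:Sip}. It then left-multiplies the accumulator by $T_i(p)$ (the \texttt{FixBreak} matrix of \eqref{eq:Tip}) and by $C_i = S_i(p)$, so after all loop iterations $\texttt{Result}[p] = S(p) = \prod_{i=0}^{s-1} T_i(p) S_i(p)$ up to the harmless initial identity factor. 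Proposition~\ref{prop:mod p calculation} then gives $\Hpfull \equiv S(p)_{21}/S(p)_{11} \pmod p$, which is exactly what the final loop extracts. One must also check the bookkeeping: that the swap when $0 \in \alpha$ is justified by the isomorphism $M^{\alpha,\beta}_z \cong M^{\beta,\alpha}_{1/z}$, and that good primes dividing the denominator of $z$ (finitely many, independent of $X$) and the prime $p=2$ can be excluded and handled separately.

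For the running time, fix $\alpha, \beta$, so $b$, $s$, $r$, and the number of residue classes $|(\Z/b\Z)^\times|$ are all $O(1)$. The outer double loop runs $O(1)$ times, so it suffices to bound one call to \texttt{RemTreeWithSpacing}. The entries of each $A_{i,c}(k)$ are integers of absolute value $O(k^r) = O(X^r)$ times the fixed denominator $d_{i,c}$ and the fixed numerator of $z$, hence of bit size $O(\log X)$; so in the notation of Theorem~\ref{thm:acc_rem_tree analysis}, $H = O(\log X)$. The number of matrices $b'$ (the length of the \texttt{mats} list, padded to a power of two) is $O(X)$, since it is bounded by $\max_p(m_{i+1}-m_i) \le p-1 \le X$. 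The product of the moduli $\prod p_n$ over good primes $p \equiv c \pmod b$ with $p \le X$ has bit size $B = O(X)$ by the prime number theorem. Thus Theorem~\ref{thm:acc_rem_tree analysis} gives running time $O((B + b'H)\log(B+b'H)\log b') = O((X + X\log X)\log(X\log X)\log X) = O(X \log(X)^3)$ for each call. The per-prime post-processing — the $O(1)$ multiplications by $T_i(p)$ and the final division, each on $2\times 2$ matrices over $\Z/p\Z$ — costs $O(\log X)$ bit operations per prime (or $\widetilde{O}(\log X)$ with fast arithmetic), contributing $\widetilde{O}(X)$ in total, which is absorbed. Summing the $O(1)$ calls preserves the bound $O(X\log(X)^3)$.

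The main obstacle I expect is not the complexity arithmetic, which is a direct substitution into Theorem~\ref{thm:acc_rem_tree analysis}, but rather verifying that the algorithm's cut-point and spacing conventions line up exactly with Definition~\ref{def:acc_rem_tree notation} and Algorithm~\ref{alg:acc_rem_tree_with_spacing}: one must confirm that $m_{i+1} - m_i$ is the correct number of multiplicands on the open interval $(\gamma_i, \gamma_{i+1})$ (so that the missing endpoint terms $m = m_i+1$ and $m = m_{i+1}$ are precisely the ones bridged by $T_i(p)$), that $h_i(\gamma, p)$ correctly records the product of $\omega$-factors skipped at the break depending on whether $\gamma(p-1) < m_i$, $\ge m_i+1$, or equals exactly $m_i$, and that the scalars tracked silently in $S(p)_{11}$ (the common denominators $g_{i,c}$ together with the $d_{i,c}$) never vanish modulo $p$ for good $p$, so that the final division is well-defined — this last point uses Lemma~\ref{lem:zigzag_invariance} to guarantee that all the genuine $p$-divisibility is accounted for by $\sigma_i$ and $\tau_i$ and does not leak into $S(p)_{11}$. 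Once these compatibilities are checked, correctness follows from chaining Lemmas~\ref{lem:top left entry}, \ref{lem:single interval}, and~\ref{lem:single interval2} exactly as in Proposition~\ref{prop:mod p calculation}.
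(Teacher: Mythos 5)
Your proposal is correct and takes essentially the same route as the paper: correctness is reduced to Proposition~\ref{prop:mod p calculation}, and the runtime follows by feeding $B = O(X)$, $H = O(\log X)$, and $O(X)$ matrices per call into Theorem~\ref{thm:acc_rem_tree analysis}, over an $O(rb) = O(1)$ number of calls. Your version is somewhat more explicit than the paper's terse proof — in particular you spell out why $H = O(\log X)$ and flag the cut-point/$S(p)_{11}$-nonvanishing bookkeeping, which the paper absorbs silently into Proposition~\ref{prop:mod p calculation} — but the substance is identical.
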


\begin{proof}
Correctness is immediate from Proposition~\ref{prop:mod p calculation}.
The runtime is dominated by the calls to Algorithm~\ref{alg:acc_rem_tree_with_spacing};
these calls takes place inside a loop over consecutive elements of $\alpha \cup \beta \cup \{0,1\}$
and a second loop over residue classes modulo a divisor of $b$.
These two loops together have length $O(rb)$;
combining with the runtime estimate from Theorem~\ref{thm:acc_rem_tree analysis}
(taking $B = b = O(X), H = O(\log X)$)
yields the desired result.
\end{proof}

\subsection{Implementation notes}

We have implemented Algorithm~\ref{alg:traces} in SageMath,
using a variant of Algorithm~\ref{alg:acc_rem_tree_with_spacing} implemented in C by Drew Sutherland
(see Remark~\ref{rmk:remainder forest}).
Our implementation is available at
\begin{center}
\url{https://github.com/edgarcosta/amortizedHGM},
\end{center}
and vastly outperforms SageMath and Magma while giving matching answers; see~Table~\ref{tab:timings} for sample timings.

\begin{table}[!ht]
\addtocounter{equation}{1}
\begin{tabular}{rlll|rlll|rl}
  $X$ & Alg.~\ref{alg:traces} & Sage & Magma
      &
  $X$ & Alg.~\ref{alg:traces}\\
\hline
 & & & &\\[-0.14in]
  $2^{10}$ & \texttt{0.07s} & \texttt{0.39s} & \texttt{0.11s}
           &
  $2^{18}$ & \texttt{1.81s} \\
         
  $2^{11}$ & \texttt{0.05s} & \texttt{0.68s} & \texttt{0.35s}
           &
  $2^{19}$ & \texttt{4.59s} \\

  $2^{12}$ & \texttt{0.06s} & \texttt{2.12s} & \texttt{1.29s}
           &
  $2^{20}$ & \texttt{10.71s} \\

  $2^{13}$ & \texttt{0.08s} & \texttt{7.39s} & \texttt{4.83s}
           &
  $2^{21}$ & \texttt{24.53s} \\
  
  $2^{14}$ & \texttt{0.12s} & \texttt{26.0s} & \texttt{18.24s}
           &
  $2^{22}$ & \texttt{58.0s} \\
  
  $2^{15}$ & \texttt{0.18s} & \texttt{92.27s} & \texttt{68.35s}
           &
  $2^{23}$ & \texttt{135s} \\

  $2^{16}$ & \texttt{0.34s} & \texttt{343s} & \texttt{280s}
           &
  $2^{24}$ & \texttt{322s} \\
           
  $2^{17}$ & \texttt{0.80s} & \texttt{1328s} & \texttt{1190s}
           &
  $2^{25}$ & \texttt{857s} \\
\end{tabular}
\caption{Comparison of Algorithm~\ref{alg:traces} against SageMath and Magma for $\alpha =  (\frac14, \frac12, \frac12, \frac34), \beta = (\frac13, \frac13, \frac23, \frac23)$ and $z = 1/5$. Note the observable difference between linear and quadratic complexity.}
\label{tab:timings}
\end{table}

\subsection{An example}

\begin{figure}
\includegraphics[width=0.75\textwidth]{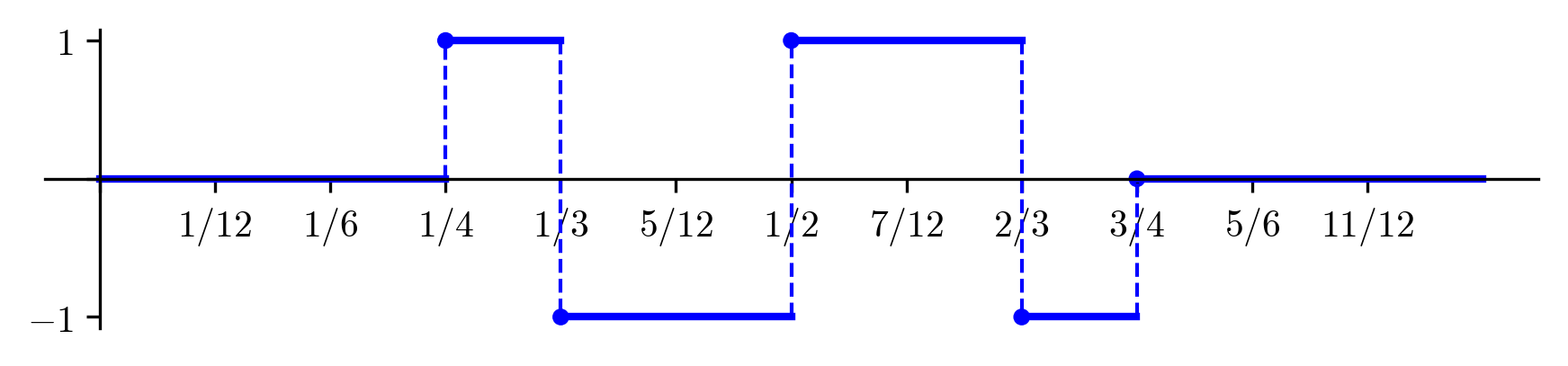}
\caption{$Z_{\alpha, \beta}(x)$ for $\alpha =  (\frac14, \frac12, \frac12, \frac34), \beta = (\frac13, \frac13, \frac23, \frac23)$\label{plot:zigzag}}
\end{figure}

Let $\alpha = (\frac14, \frac12, \frac12, \frac34), \beta = (\frac13, \frac13, \frac23, \frac23)$ and $z=\frac15$.  We plot the zigzag function in Figure~\ref{plot:zigzag}.  Using equation~\eqref{eq:mot_wt}, we see that $M^{\alpha, \beta}$ has weight $1$ and the intervals contributing to the computation of $\Hp$ are $(\gamma_2, \gamma_3) = (\frac13, \frac12)$ and $(\gamma_4, \gamma_5) = (\frac23, \frac34)$.
For the remainder of the example we will focus on the congruence class $p \equiv 7 \pmod{12}$.
Applying Lemma~\ref{lem:shift} to $\gamma_2 = \frac 13$ (resp. $\gamma_4 = \frac 23$), we obtain $\delta_2 = \frac23$ and $\epsilon_2=1$ (resp. $\delta_4 = \frac 13$ and $\epsilon_4 = 1$).
By~\eqref{eq:f and g} and~\eqref{eq:sigmai},
\begin{align*}
f_{2, 7}(k) &= 5184k^4 + 8640k^3 + 4428k^2 + 852k + 55, \\
g_{2, 7}(k) &= 25920k^4 + 69120k^3 + 63360k^2 + 23040k + 2880, \\
f_{4, 7}(k) &= 5184k^4 + 12096k^3 + 9612k^2 + 2820k + 175, \\
g_{4, 7}(k) &= 25920k^4 + 86400k^3 + 106560k^2 + 57600k + 11520,
\end{align*}
and $\sigma_2 = \sigma_4 = -1$.
Taking $p=67$, we obtain $(m_2, m_3) = (22, 33)$ and $(m_4, m_5) = (44, 49)$.
Using an accumulating remainder tree (or simple multiplication), we get
\begin{align*}
S_2(67) &= \left( \begin{matrix} 65 & 0 \\ 34 & 5 \end{matrix} \right), &
S_4(67) &= \left( \begin{matrix} 54 & 0 \\ 25 & 41 \end{matrix} \right).
\end{align*}
However, we can't ignore the other intervals: they may not contribute to the sum, but they do track the Pochhammer symbols.  Similar computations show
\[
S_0(67) = \left( \begin{matrix} 38 & 0 \\ 0 & 62 \end{matrix} \right), \,
S_1(67) = \left( \begin{matrix} 50 & 0 \\ 0 & 47 \end{matrix} \right), \,
S_3(67) = \left( \begin{matrix} 1 & 0 \\ 0 & 16 \end{matrix} \right), \,
S_5(67) = \left( \begin{matrix} 1 & 0 \\ 0 & 38 \end{matrix} \right).
\]
It remains to handle the break points. Using Definition~\ref{def:htau} we get
\begin{align*}
T_0(67) &= \left( \begin{matrix} 1 & 0 \\ 0 & 6 \end{matrix} \right), &
T_1(67) &= \left( \begin{matrix} 1 & 0 \\ 0 & 31 \end{matrix} \right), &
T_2(67) &= \left( \begin{matrix} 1 & 0 \\ -1 & 12 \end{matrix} \right), \\
T_3(67) &= \left( \begin{matrix} 1 & 0 \\ -1 & 40 \end{matrix} \right),&
T_4(67) &= \left( \begin{matrix} 1 & 0 \\ -1 & 40 \end{matrix} \right), &
T_5(67) &= \left( \begin{matrix} 1 & 0 \\ -1 & 31 \end{matrix} \right).
\end{align*}
Putting them all together, we get
\[
S(67) =  T_0(67) S_0(67) \cdots T_5(67) S_5(67)= \left( \begin{matrix} 21 & 0 \\ 33 & 21 \end{matrix} \right)
\]
yielding $\Hvalue{67}{\alpha}{\beta}{\frac15} \equiv \frac{33}{21} \equiv 59 \pmod{67}$.


\section{Future goals and challenges}

We would like to be able to compute
$\Hvalue{p^f}{\alpha}{\beta}{z} \pmod{p^e}$
in average polynomial time for general $e$ and $f$, but we currently only implement this for $e=f=1$.
We highlight the key points at which new ideas would be needed to achieve this goal.

\subsection{The case \texorpdfstring{$e>1$}{e > 1}}\label{subsection:e greater than 1}

Allowing $e>1$ creates two related issues where our computation exploits extra structure of the trace formula mod $p$: the replacement of $[z]$ with $z$, and the use of the functional equation in \eqref{use of functional equation} to compare two values of $\Gamma_p$ at arguments that differ by $\tfrac{1}{1-p}$.

Such issues can usually be resolved using the ``generic prime''
technique of~\cite[\S 4.4]{harvey-15}: make the average polynomial time computation carrying suitable nilpotent variables, then make a separate specialization for each $p$.

\subsection{The case \texorpdfstring{$f>1$}{f > 1}}

Allowing $f>1$ creates more serious issues because of the change in the definition of $\Gamma_q^*(x)$,
which interferes with our division of the summation into a fixed number of ranges.
To see this in more detail, fix $v \in \{0,\dots,f-1\}$.
For each $\gamma \in \alpha \cup \beta$, a break occurs when the value of $\{p^v(\gamma - \tfrac{m}{q-1})\}$
changes when $m$ goes to $m+1$; there are $p^v$ such breaks.

It is unclear whether one can rearrange the formula~\eqref{eq:Hq} to remedy this issue.
It may help to implement the method of Frobenius structures suggested
in~\cite{kedlaya-19}, which scales linearly in $p$ rather than $q$. 
We may then argue as in Theorem~\ref{T:L-series use case} to compute the first $X$ coefficients of an $L$-series in average polynomial time.

\bibliographystyle{alpha}
\bibliography{references}

\end{document}